\newtheorem{thm}{Theorem}[section]
\newtheorem{remark}[thm]{Remark}
\newtheorem*{Satz*}{Satz}
\newtheorem{Lemma}[thm]{Lemma}
\newtheorem{Corollary}[thm]{Corollary}
\newtheorem{Assumption}{Assumption}
\newcommand{\mathset}[1]{{\left\{#1\right\}}}
\newcommand{\absolute}[1]{\left\lvert#1\right\rvert}
\newcommand{\norm}[1]{\left\|#1\right\|}
\DeclareMathOperator{\PGL}{PGL}
\DeclareMathOperator{\SL}{SL}
\DeclareMathOperator{\dom}{dom}
\DeclareMathOperator{\range}{Ran}
\title{Schottky-Invariant $p$-Adic Diffusion Operators
}
\author{Patrick Erik Bradley}
\date{\today}
\begin{document}

\maketitle

\begin{abstract}
A parametrised diffusion operator on the regular domain $\Omega$ of a $p$-adic Schottky group is constructed. It is defined as an integral operator on the complex-valued functions on $\Omega$ which are invariant under the Schottky group $\Gamma$, where integration is against the measure defined by an invariant regular differential $1$-form $\omega$.  It is proven that the space of Schottky invariant $L^2$-functions on $\Omega$ outside the zeros of $\omega$ has an orthonormal basis consiting of $\Gamma$-invariant extensions of Kozyrev wavelets which are eigenfunctions of the operator. The eigenvalues are calculated, and it is shown that the heat equation for this operator provides a unique solution for its Cauchy problem with Schottky-invariant continuous initial conditions supportes outside the zero set of $\omega$, and gives rise to a strong Markov process on the corresponding orbit space for the Schottky group whose paths are c\`adl\`ag.
\end{abstract}

\section{Introduction}

The first diffusion operators on  $p$-adic domains are Vladimirov-Taibleson operators \cite{Taibleson1975,VVZ1994}. These are convolution operators on non-archimedean local fields, and hence diagonalisable by the Fourier transform. 
From this as a starting point, they were extended to the ad\`eles, and their connection to integration on path spaces via Feynman-Kac formulas was explored, including proofs that such types of diffusion are scaling limits, cf.\ e.g.\ \cite{Weisbart2021,Urban2024,Weisbart2024}.
As a $p$-adic ball is itself a compact abelian group, the Fourier transform method can be adapted to that case in the study of the heat equation \cite{Kochubei2018}.
\newline

Of importance is also their representation as a Laplacian integral operator. This allows the extension to compact $p$-adic subdomains which are not necessarily endowed with a group structure, and where Turing patterns can be observed
\cite{ZunigaNetworks,Zuniga2022}. Also, certain compact $p$-adic manifolds known as Mumford curves became amenable to their own diffusion operators in integral form \cite{brad_HeatMumf}.
\newline

The spectrum of such Laplacian operators can be studied via Kozyrev wavelets, introduced in order to find an orthonormal basis of the Hilbert space $L^2(\mathds{Q}_p)$ 
consisting of eigenfunctions of the Vladimirov operator \cite{Kozyrev2002,Kozyrev2004}.
These turned out to be extendable to Mumford curves, \cite{brad_HeatMumf}.
And in recent work, efforts were made in order to rid the constructions on Mumford curves from their dependence on a fundamental domain. Whereas in \cite{brad_HeatMumf}, the construction is exclusively on a compact fundamental domain, in the case of genus one, 
theta functions are used in order to construct an invariant meromorphic function
\cite{brad_thetaDiffusionTateCurve}, 
and this method was also extended to higher genus \cite{Brad_HearingGenusMumf}, 
allowing to hear the genus of a Mumford curve from the spectrum of a diffusion operator.
\newline

The main goal of this article is 
to not require the removal of an essential part of a given Mumford curve by resorting to a meromorphic function as previously done in \cite{Brad_HearingGenusMumf}, where it was necessary to exclude the pre-image of the limit set of the Schottky group under taking differences $x-y$ of two variables $x,y$ in the regular domain of the Schottky group action. And this is in general a set of positive measure. In the new approach here, only the zeros of a regular differential $1$-form need to be removed, and these form a zero set, thus obtaining a diffusion on a given Mumford curve almost everywhere through a Schottky invariant diffusion operator almost everywhere on the domain of regularity of the Schottky group.
This is obtained by simply taking the kernel function locally as a  positive power of the
$p$-adic absolute difference $\absolute{\beta x-\gamma y}^\alpha$, with $\beta,\gamma$ taken from the Schottky group, appropiately weighted by a function of the length  $\ell(\beta^{-1}\gamma)$ of the word $\beta^{-1}\gamma$ in a given set of $g$ generators of the Schottky group, and their inverses as a reference alphabet.
The only set now which needs to be excluded are the zeros of the invariant regular differential $1$-form $\omega$ giving rise to the measure $\absolute{\omega}$ on the regular domain $\Omega(K)$ of the Schottky group $\Gamma$. The main results of this article can be stated as follows:
\newline

\noindent
{\bf Theorem 1.}\emph{
The space of $\Gamma$-invariant $L^2$-functions on $\Omega$ outside the zeros of $\omega$ has an orthonormal basis consisting of $\Gamma$-invariant extensions of Kozyrev
wavelets supported on discs outside the zero set of $\omega$.
These are eigenfunctions of the self-adjoint diffusion operator $-\Delta_\alpha^\frac12$ on that Hilbert space. The eigenvalue corresponding to such a wavelet $\psi_{B,j}$, where $B$ is a disc, and $j$ an element of the residue field of the non-archimedean local field $K$, is
\[
\lambda_B=\mu^\Gamma(F)^{-1}
\sum\limits_{\gamma\in\Gamma}\absolute{\pi}^{\alpha_g\ell(\gamma)}
\left(
\int_{F\setminus B}
\absolute{x-\gamma y}^{-\alpha}\absolute{\omega(y)}+\mu^\Gamma(B)^{1-\alpha}
\right)
\]
with $\alpha>0$, and
depending on $B$ and a good fundamental domain $F$, and for any $\gamma\in\Gamma$, and where 
\[
\mu^\Gamma(C)=\int_C\absolute{\omega}
\] 
for any $\absolute{\omega}$-measurable set $C$.
The eigenvalues have finite multiplicities, and are invariant under shifting from a given fundamental domain $F$ to $\gamma F$ with $\gamma\in\Gamma$.
}
\newline

The technical notion of ``good fundamental domain'' is introduced in \cite[I.4.1.3]{GvP1980}, whose existence is guaranteed for any Mumford curve. 
Also, a straightforward transition formula for the eigenvalues under the replacement $F\to\gamma F$ with $\gamma\in
\Gamma$ is given in Lemma 4.12 below,  because there could be a possible effect of the actual arrangement the ``holes'' cut out of a disc in order to form a good fundamental domain.
\newline

The next theorem deals with the Cauchy problem for the heat equation
\[
\left(\frac{\partial}{\partial t}+\Delta_\alpha^{\frac12}\right)h(t,x)=0
\]
having initial condition $h(0,x)=h_0(x)$ wich is a continuous $\Gamma$-invariant function defined on $\Omega(K)\setminus V(\omega)$,  where $V(\omega)$ is the zero set of the differential form $\omega$, and the solution space is assumed to be
\[
C^1((0,\infty),\Omega(K)\setminus V(\omega))^\Gamma
\]
where the superscript $^\Gamma$ denotes that the functions are assumed invariant under the action of $\Gamma$.
\newline

\noindent
{\bf Theorem 2.}
\emph{
The heat equation for operator $-\Delta_\alpha^\frac12$ provides a unique solution for its Cauchy problem with $\Gamma$-invariant continuous initial condition $h_0(x)$ supported outside the zero set $V(\omega)$ of $\omega$, and is given as
\[
h(t,x)=
\int_{\Omega(K)\setminus V(\omega)}
h_0(y)p_t(x,\absolute{\omega(y)})
\]
given by a probability measure $p_t(x,\cdot)$ on the Borel $\sigma$-algebra on $\Omega(K)\setminus V(\omega)$, which is 
also the transition function of a strong Markov process on the orbit space $(\Omega(K)\setminus V(\omega))/\Gamma$ whose paths are c\`adl\`ag.
}

\section{Notation and Concepts Used}
Assume that $K$ is a $p$-adic number field, i.e.\ a finite extension of the field $\mathds{Q}_p$ of $p$-adic numbers. Denote the Haar measure on $K$ as $\mu_K$, or as $\absolute{dx}$ if the dependence on a variable $x$ is to be emphasised. It is normalised such that $\mu_K(O_K)=1$, where $O_K$ is the ring of integers of $K$. 
The absolute value on $K$ is denoted as $\absolute{\cdot}$, and is chosen such that
\[
\absolute{\pi}=p^{-f}
\]
where $\pi$ is a uniformiser of $O_K$, and $f$ is the degree of the residue field $O_K/\pi O_K$ as an extension of the finite field $\mathds{F}_p$ with $p$ elements. Indicator functions will often be written as
\[
\Omega(x\in B)=\begin{cases}
1,&x\in B\\
0,&x\notin B
\end{cases}
\]
where $B$ is a Borel measurable subset of $K$.
\newline

Any $n$-dimensional $K$-analytic manifold $X$ with a regular differential $n$-form $\omega$ has a measure
$\absolute{\omega}$ on $X$ outside the vanishing set $V(\omega)$ in $X$,
which locally on $U\subset X$ has the form
\[
\absolute{\omega}|_U=\absolute{f}\absolute{\mu_K}
\]
with $f\colon U\to K$  an analytic function, cf.\ \cite[Ch.\ II.2.2]{WeilAAG}, or \cite[Ch.\ 7.4]{IgusaLocalZeta}. 
Unlike in \cite{Brad_HearingGenusMumf}, the measure $\absolute{\omega}$ will not be extended to  $V(\omega)$, here. This exceptional set  is a zero set according to \cite[Lem.\ 3.1]{Yasuda2017}. More about $K$-analytic manifolds can be learned in \cite{Serre1992} or \cite{Schneider2011}, if the reader wishes so.
\newline

A Mumford curve can be viewed as a $1$-dimensional compact $K$-analytic manifold $X$ having an atlas consisting of pieces bi-analytically isomorphic to holed discs in $K$. They are explained in depth e.g.\ in \cite{GvP1980}. What is needed for this article is that they have a universal covering space which is open in the projective line $\mathds{P}^1(K)$, and the topological fundamental group $\Gamma$ of $X$ is a free group generated by $g$ hyperbolic M\"obius transformations in $\PGL_2(K)$, where $g$ is the genus of $X$. The group $\Gamma$ is also known as a so-called Schottky group.
A Mumford curve is also a projective algebraic curve defined over $K$, and posseses regular differential $1$-forms which are in fact algebraic. Namely,
according to \cite[Prop.\ VI.4.2]{GvP1980}, the space of regular differential $1$-forms on a Mumford curve of genus $g$ has dimension $g$ over the ground field $K$.
A regular algebraic differential $1$-form on the $K$-rational points $X(K)$ of $X$ is given by a $\Gamma$-invariant holomorphic  differential $1$-form
on $\Omega(K)$, where $\Omega\subset\mathds{P}^1_K$ is the universal covering space of $X$ which exists as an open analytic domain, cf.\  \cite[Ch.\ IV.3]{GvP1980}.

\begin{Assumption}\label{assumption}
It is assumed that the differential $1$-form $\omega\in\Omega_{X/K}^1$ has all its zeros in $X(K)$, the set of $K$-rational points of the Mumford curve $X$.
\end{Assumption}

This assumption can be fulfilled for a given algebraic differential $1$-form after a finite extension of the field $K$, if necessary.
The $\Gamma$-invariant differential $1$-form corresponding to $\omega$ of Assumption \ref{assumption} is also denoted as $\omega$. This should not be a cause for confusion, as the points of the Mumford curve $X$ themselves are $\Gamma$-orbits.
\newline

Let $L^2(\Omega(K),\absolute{\omega})$ be the Hilbert space of $L^2$-functions on $\Omega(K)$, on which the inner product 
\[
\langle f,g\rangle_\omega=\int_{\Omega(K)} f(x)\overline{g(x)}\,\absolute{\omega(x)}
\]
is used. The space of continuous functions on $\Omega(K)$ is denoted as $C(\Omega(K),\norm{\cdot}_\infty)$, and is a Banach space w.r.t.\ the supremum norm $\norm{\cdot}_\infty$.
\newline

Let $\mathscr{F}(\Omega(K))$ be a space of functions
$\Omega(K)\to\mathds{C}$
and define
\[
\mathscr{F}(\Omega(K))^\Gamma=\mathset{u\in\mathscr{F}\mid \forall\gamma\in \Gamma\;\forall x\in\Omega(K)
\colon u(\gamma x)=u(x)}
\]
as the corresponding subspace of $\Gamma$-invariant functions.
\newline

Similarly, a corresponding notation will be used for function spaces on $\Omega(K)\setminus V(\omega)$, where $V(\omega)\setminus\Omega(K)$ denotes the vanishing set of $\omega$. Since the differential $1$-form is algebraic as a differential form on $X$, this vanishing set is countable. An example is the space $L^2(\Omega(K)\setminus V(\omega))^\Gamma$.

\section{Kernel function for $\Gamma$-invariant functions}

Let $\Gamma=\langle\gamma_1,\dots,\gamma_g\rangle\subset\PGL_2(K)$ be a Schottky group on $g\ge 1$ generators with $K$ a non-archimedean local field. 
As an abstract group, $\Gamma$ is isomorphic to the free group $F_g$ with $g$ generators. Each element of $F_g$ can be uniquely represented as a reduced word over the alphabet $\mathset{\gamma_1^{\pm 1},\dots,\gamma_g^{\pm 1}}$, i.e.\ by deleting all expressions of the form
\[
\gamma_i\gamma_i^{-1}=1\quad\text{or}\quad
\gamma_i^{-1}\gamma_i=1
\]
for $i=1,\dots,g$. The length of a reduced word $w$ over a finite alphabet $\mathcal{A}$ is defined as the sum of the occurrence counts of each letter from $\mathcal{A}$ in $w$, and is denoted as $\ell(w)$.
\newline

The following result is well-known:

\begin{Lemma}\label{ReducedWordCount}
Fix $\beta\in\Gamma$. The number of elements $\gamma\in\Gamma$ 
such that $\beta^{-1}\gamma$ has  length $\ell$ is at most
\[
2g(2g-1)^{\ell(\beta)+\ell}
\]
for any natural number $\ell>0$.
\end{Lemma}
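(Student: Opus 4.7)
The plan is to exploit the fact that $\Gamma$, being free on $g$ generators, admits a unique reduced-word representation for each of its elements over the alphabet $\mathcal{A}=\{\gamma_1^{\pm 1},\dots,\gamma_g^{\pm 1}\}$ of size $2g$, and then to reduce the question to an elementary enumeration of reduced words of length $\ell$.

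First I would observe that left multiplication by $\beta^{-1}$ is a bijection $\Gamma\to\Gamma$, so the assignment $\gamma\mapsto w:=\beta^{-1}\gamma$ yields a bijection between $\{\gamma\in\Gamma:\ell(\beta^{-1}\gamma)=\ell\}$ and the set of reduced words $w\in\Gamma$ of length exactly $\ell$. It therefore suffices to count reduced words of length $\ell$ in the free group $F_g$. This is a classical exercise: the first letter of such a word has $2g$ choices from $\mathcal{A}$, and once the $i$-th letter has been fixed, the $(i+1)$-th letter has at most $2g-1$ admissible choices, namely every element of $\mathcal{A}$ except the formal inverse of the $i$-th letter (which is forbidden if the word is to remain reduced). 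This yields the familiar bound of at most $2g(2g-1)^{\ell-1}$ reduced words of length $\ell$.

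The claimed inequality $2g(2g-1)^{\ell(\beta)+\ell}$ then follows at once from $2g(2g-1)^{\ell-1}\leq 2g(2g-1)^{\ell(\beta)+\ell}$, since $\ell(\beta)\geq 0$ and $2g-1\geq 1$. There is no real obstacle here; in fact the bound stated in the lemma is considerably weaker than what this direct count produces. The extra factor $(2g-1)^{\ell(\beta)+1}$ appearing in the exponent seems to be inserted in anticipation of the later estimates of the spectral sum in Theorem~1, where a uniform shape with $\ell(\beta)$ in the exponent alongside $\ell$ is presumably more convenient than the tighter estimate $2g(2g-1)^{\ell-1}$ obtained above.
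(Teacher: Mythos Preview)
Your argument is correct, and in fact tighter and cleaner than the paper's. Both proofs rest on the same elementary count of reduced words of length $\ell$ (namely $2g$ choices for the first letter and $2g-1$ for each subsequent one, giving $2g(2g-1)^{\ell-1}$). The difference is in how the general case $\beta\neq 1$ is reduced to this count. You use the bijection $\gamma\mapsto \beta^{-1}\gamma$ directly, which immediately identifies the set in question with the set of reduced words of length $\ell$ and yields the sharp bound $2g(2g-1)^{\ell-1}$; the stated bound then follows trivially. The paper instead appeals to the triangle inequality $\ell(\beta^{-1}\gamma)\le \ell(\beta)+\ell(\gamma)$ together with a remark about cancellation with suffixes of $\beta^{-1}$, which is a more circuitous route to a weaker bound. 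Your observation that the extra factor $(2g-1)^{\ell(\beta)+1}$ is slack is correct; the lemma as stated is simply not sharp, and your direct count already delivers what is needed for the later convergence estimates.
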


\begin{proof}
Assume first that $\beta=1$. Then
any of the $2g$ letters in $\gamma_1^{\pm 1},\dots,\gamma_g^{\pm1}$ can be appended by any letter from this alphabet, except the inverse of that letter. So, initially, there are $2g$ choices, after which there are $2g-1$ choices in each further step in constructing a reduced word in $\Gamma$. 

\smallskip
For any $\beta\in\Gamma$, observe that
\[
\ell(\beta^{-1}\gamma)\le \ell(\beta)+\ell(\gamma)
\]
which yields the desired upper bound by using the previous case and taking care of possible cancelling with suffixes of $\beta^{-1}$.
\end{proof}

Gerritzen and van der Put in \cite[I.4.1.3]{GvP1980} introduce the notion of \emph{good fundamental domain} for a $p$-adic Schottky group $\Gamma$, which is needed below. This is the complement in the projective line $\mathds{P}^1(K)$ of $2g$  open discs $B_1,C_1,\dots, B_g,C_g$ whose ``closures'' $B_i^+$, $C_i^+$ (i.e.\ where in the defining inequalities ``$<$'' is replaced with ``$\le$'', and radii are assumed to be in the valuation group of $K$) are mutually disjoint, and there are $g$ generators $\gamma_1,\dots,\gamma_g$ such that 
\[
\gamma_i(\mathds{P}^1(K)\setminus B_i)=C_i^+,\quad
\gamma_i(\mathds{P}^1(K)\setminus B_i^+)=C_i
\]
for $i=1,\dots,g$.
\newline

Let $\Omega(K)\subset\mathds{P}^1(K)$ be defined as
the complement of the set $\mathscr{L}\subset\mathds{P}^1(K)$ of limit points of the action of $\Gamma$, assuming that $\infty\in\mathscr{L}$.
Let $F=F(K)\subset\Omega(K)$ be a good fundamental domain for $\Gamma$.
Now, 
let $\alpha_g>0$
such that
\begin{align}\label{growthAssumption}
p^{f\alpha_g}>2g
\end{align}
and
define
\begin{align}\label{KernelFunction}
H_\alpha(\beta x,\gamma y)=\mu^\Gamma(F)^{-1}
\absolute{\pi}^{\alpha_g\ell(\beta^{-1}\gamma)}
\absolute{\beta x-\gamma y}^{-\alpha}
\end{align}
for $x,y\in F$, $\beta,\gamma\in\Gamma$,  and $\alpha>0$, and
where the  $\Gamma$-invariant Borel measure on $\Omega(K)\setminus V(\omega)$
evaluated on sets is as
\[
\mu^\Gamma(B)=\int_B\absolute{\omega}
\]
for any $\absolute{\omega}$-measurable set $B\subset\Omega(K)\setminus V(\omega)$.
\newline

Now, define the operator
\[
\mathcal{H}_\alpha u( \beta x)=\sum\limits_{\gamma\in\Gamma}
\int_{F}H_\alpha(\beta x,\gamma y)(u(y)-u(x))\absolute{\omega(y)}
\]
where $\omega\in\Omega^1(\Omega(K))^\Gamma$ is a $\Gamma$-invariant holomorphic differential $1$-form on $\Omega(K)$, and $u\in C\left(\Omega(K),\absolute{\cdot}_\infty\right)^\Gamma$ or $u\in L^2(\Omega(K),\absolute{\omega})$, and $x\in F$.
Observe that $\mathcal{H}_\alpha$ is an operator of the following:
\begin{align*}
\mathcal{H}_\alpha&\colon L^2(\Omega(K),\absolute{\omega})^\Gamma\to L^2(\Omega(K),\absolute{\omega})
\\
\mathcal{H}_\alpha&\colon
C(\Omega(K),\norm{\cdot}_\infty)^\Gamma\to C(\Omega(K),\norm{\cdot}_\infty)
\end{align*}
for $\alpha>0$.
Further, there is a bilinear
Dirichlet form
\begin{align*}
\mathscr{E}_\alpha(u,v)
&=\langle \mathcal{H}_\alpha u,\mathcal{H}_\alpha v\rangle_\omega
\\
&=\sum\limits_{\beta,\gamma\in\Gamma}
\int_F\int_F H_\alpha(\beta x,\gamma y)(u(y)-u(x))\left(\overline{v(y)}-\overline{v(x)}\right)\absolute{\omega(y)}\absolute{\omega(x)}
\end{align*}
and a quadratic Dirichlet form
\[
\mathscr{E}_\alpha(u)
=\langle\mathcal{H}^\alpha u,\mathcal{H}^\alpha u\rangle_\omega
\]
for $u,v\in L^2(\Omega(K),\absolute{\omega})^\Gamma$.

\begin{Lemma}\label{denselyDefinedOperator}
The operator $\mathcal{H}_\alpha$ is densely defined for $\alpha>0$.
\end{Lemma}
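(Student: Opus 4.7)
The plan is to exhibit a dense subspace of $L^2(\Omega(K),\absolute{\omega})^\Gamma$ (and of $C(\Omega(K),\norm{\cdot}_\infty)^\Gamma$) on which the defining sum-integral for $\mathcal{H}_\alpha u$ converges absolutely and lands in the target space. The candidate I would take is $\mathscr{D}$, the $\Gamma$-invariant extensions to $\Omega(K)$ of locally constant functions on $F$ whose support avoids $V(\omega)\cap F$. Since the orbit space $\Omega(K)/\Gamma$ is the compact Mumford curve $X$ and $V(\omega)$ is countable (so an $\absolute{\omega}$-null set, and a closed set in $X$ that continuous functions can be made to avoid in supremum norm by shrinking supports), locally constant functions of this form are dense in both $L^2(\Omega(K),\absolute{\omega})^\Gamma$ and $C(\Omega(K),\norm{\cdot}_\infty)^\Gamma$ by the standard density of locally constant functions on compact $K$-analytic manifolds.

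The next step is to check that $\mathcal{H}_\alpha u$ is well-defined for $u\in\mathscr{D}$. For each pair $(\beta,\gamma)$, the integrand $H_\alpha(\beta x,\gamma y)(u(y)-u(x))$ has a potential singularity only at $y_0=\gamma^{-1}\beta x$. Since $F$ is a fundamental domain, $y_0\in F$ for at most one $\gamma\in\Gamma$, and in that exceptional case $\Gamma$-invariance yields $u(y_0)=u(\beta x)=u(x)$, so local constancy of $u$ forces $u(y)-u(x)=0$ on a neighborhood of $y_0$, removing the singularity. The remaining integrand is bounded on the compact set $F$ and finite away from $V(\omega)$, with an estimate of the form
\[
\left\absolute\int_F H_\alpha(\beta x,\gamma y)(u(y)-u(x))\absolute{\omega(y)}\right\absolute
\le C(x)\,\absolute{\pi}^{\alpha_g\ell(\beta^{-1}\gamma)},
\]
where $C(x)$ comes from a uniform lower bound $\absolute{\beta x-\gamma y}\ge c(x)>0$ for $y\in F$ and $\gamma$ outside the exceptional index, using that the translates $\gamma F$ are pairwise disjoint. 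Summing over $\gamma$ and grouping by word length $\ell(\beta^{-1}\gamma)=\ell$, Lemma \ref{ReducedWordCount} bounds the series by a constant times $\sum_{\ell\ge 0}(2g-1)^\ell\absolute{\pi}^{\alpha_g\ell}$, which converges since the growth assumption \eqref{growthAssumption} is equivalent to $(2g-1)\absolute{\pi}^{\alpha_g}<1$.

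The step I expect to be the most delicate is establishing the uniform lower bound for $\absolute{\beta x-\gamma y}$ as $\gamma$ varies through $\Gamma\setminus\{\gamma_0\}$ and $y$ through $F$, with the constant $C(x)$ locally bounded in $x$ away from $V(\omega)$. One must ensure that this constant does not degenerate along an infinite subsequence of $\gamma$'s, so that the resulting bound survives taking the $L^2$- or supremum norm on $\Omega(K)$ and keeps $\mathcal{H}_\alpha u$ in the target space. This relies on the geometry of a good fundamental domain for the $p$-adic Schottky group, namely that $F$ is a finite union of pairwise disjoint closed holed discs in $\mathds{P}^1(K)\setminus\mathscr{L}$ whose $\Gamma$-translates are pairwise disjoint with controlled pairwise distances, so that points in the interior of $\beta F$ sit at positive distance from every other translate $\gamma F$, uniformly on compacta of $\beta F\setminus V(\omega)$. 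Granted this, combining the two estimates above shows $\mathcal{H}_\alpha\mathscr{D}$ is contained in the respective target space, and density of $\mathscr{D}$ finishes the proof.
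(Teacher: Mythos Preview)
Your approach coincides with the paper's: exhibit the $\Gamma$-invariant locally constant functions as a dense subspace, bound each integral $\mathcal{H}_\gamma^\alpha u(\beta x)$ (removing the singularity via local constancy of $u$), and control the sum over $\gamma$ by grouping according to word length and invoking Lemma~\ref{ReducedWordCount} together with condition~\eqref{growthAssumption}. The only notable difference is that the paper's geometric-series bound carries an extra factor $\absolute{\pi}^\alpha$ per step coming from the distance term, whereas you rely solely on the weight $\absolute{\pi}^{\alpha_g}$ against the word count; your version is the cleaner one, since the translates $\gamma F$ can accumulate at finite limit points and the additional decay the paper alludes to is not uniformly available in $\ell$.
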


Denote the space of $\Gamma$-invariant locally constant functions on $\Omega(K)$ as
$\mathcal{D}(\Omega(K))^\Gamma$. 

\begin{proof}
Let $u\in\mathcal{D}(\Omega(K))^\Gamma$. Then
\begin{align*}
\mathcal{H}_\gamma^\alpha u(\beta x)&:=
\int_F H_\alpha(\beta x,\gamma y)(u(y)-u(x))\absolute{\omega(y)}
\\
&=\mu^\Gamma(F)^{-1}\absolute{\pi}^{\alpha_g\ell(\beta^{-1}\gamma)}
\int_F\absolute{\beta x-\gamma y}^{-\alpha}(u(y)-u(x))\absolute{\omega(y)}
\end{align*}
for $x\in F$.
Now, the distance between $x$ and $\gamma y$ can be arbitrarily large for fixed $x,y\in F$. It takes as values integer  powers of $\absolute{\pi}^{\alpha}$. Since $u$ is locally constant, it now follows  that the integral term in $\mathcal{H}^\alpha_\gamma u(\beta x)$ converges for all $\gamma\in\Gamma$ to a value bounded from above by a positive constant times a power of $\absolute{\pi}$.
By assumption (\ref{growthAssumption}), the number of $\gamma\in\Gamma\setminus\beta$ for which the values $\mathcal{H}_\gamma^\alpha u(x)$ are fixed, is bounded from above by $(2g)^{\ell(\beta)+\ell}$ for some natural $\ell>0$, cf.\ Lemma \ref{ReducedWordCount}.
Hence,
the infinite sum
\[
\mathcal{H}_\alpha u(\beta x)=\sum\limits_{\gamma\in\Gamma}\mathcal{H}_\gamma^\alpha u(\beta x)
\]
is bounded from above by a constant times a geometric series
in a power of 
\[
\frac{\absolute{\pi}^{\alpha_g}\absolute{\pi}^{\alpha}}{2g}<1
\]
and hence
converges for any $\beta\in\Gamma$, $x\in F$, and $\alpha>0$.
\end{proof}

\begin{Lemma}
The quadratic Dirichlet form  $u\mapsto \mathcal{E}_\alpha(u)$ is densely defined.
\end{Lemma}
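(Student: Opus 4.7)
The plan is to show that the space $\mathcal{D}(\Omega(K))^\Gamma$ of $\Gamma$-invariant locally constant functions, already used as the core of $\mathcal{H}_\alpha$ in the preceding lemma, also lies in the domain of $\mathcal{E}_\alpha$. Density of this subspace in $L^2(\Omega(K),\absolute{\omega})^\Gamma$ is a standard consequence of the density of locally constant functions in $L^2$ on the compact $p$-adic quotient $\Omega(K)/\Gamma$, so the lemma will follow at once from $\mathcal{E}_\alpha(u)<\infty$ for every such $u$.

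For $u\in\mathcal{D}(\Omega(K))^\Gamma$ I would work with the explicit double-sum expression
\[
\mathcal{E}_\alpha(u)=\sum_{\beta,\gamma\in\Gamma}\int_F\int_F H_\alpha(\beta x,\gamma y)\absolute{u(y)-u(x)}^2\absolute{\omega(x)}\absolute{\omega(y)}.
\]
Choose a partition of $F$ into balls of some common small radius on which $u$ is constant; then $\absolute{u(y)-u(x)}^2$ vanishes whenever $x$ and $y$ lie in the same ball and is bounded by $4\norm{u}_\infty^2$ otherwise. On the support of the integrand $\absolute{x-y}$ is therefore bounded below by this radius, taming the a priori singular factor $\absolute{\beta x-\gamma y}^{-\alpha}$ on the diagonal term $\beta=\gamma$ via the bi-Lipschitz behaviour of $\beta$ on the compact set $F$; for off-diagonal terms, disjointness of the translates $\beta F$ and $\gamma F$ yields $\absolute{\beta x-\gamma y}\geq\dist(\beta F,\gamma F)>0$.

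The remaining estimate is essentially the one already carried out in the proof of Lemma~\ref{denselyDefinedOperator}: the factor $\absolute{\pi}^{\alpha_g\ell(\beta^{-1}\gamma)}$, combined with the counting bound of Lemma~\ref{ReducedWordCount} and the growth hypothesis (\ref{growthAssumption}), dominates the inner sum $\sum_{\gamma\in\Gamma}$ for fixed $\beta$ by a convergent geometric series, and the outer sum over $\beta$ is controlled by the same mechanism using $\ell(\beta^{-1}\gamma)\leq\ell(\beta)+\ell(\gamma)$, so the doubly-indexed series converges.

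The main obstacle is to match the $(2g)^{\ell(\beta)+\ell(\gamma)}$ counting blow-up coming from Lemma~\ref{ReducedWordCount} against the $\absolute{\pi}^{\alpha_g\ell(\beta^{-1}\gamma)}$ decay in both summation indices simultaneously; this is precisely the role played by hypothesis (\ref{growthAssumption}) and is what allows the geometric series to close, so the only real content beyond the previous lemma is verifying that the pointwise bound upgrades to absolute convergence of the double sum once the locally-constant structure of $u$ is used to remove the diagonal singularity.
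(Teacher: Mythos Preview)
Your overall strategy coincides with the paper's: take $u\in\mathcal{D}(\Omega(K))^\Gamma$, write out $\mathcal{E}_\alpha(u)$ as a double sum of integrals over $F\times F$, use local constancy to kill the diagonal singularity, and then invoke the geometric-series estimate from Lemma~\ref{denselyDefinedOperator}. The paper's own proof is essentially a one-liner (``convergence is shown similarly as in the proof of Lemma~\ref{denselyDefinedOperator}''), so in spirit you are doing exactly the same thing with more care.

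There is, however, a genuine gap in your treatment of the double sum. You write that ``the outer sum over $\beta$ is controlled by the same mechanism using $\ell(\beta^{-1}\gamma)\le\ell(\beta)+\ell(\gamma)$''. This inequality points the wrong way: since $\absolute{\pi}<1$, it only yields
\[
\absolute{\pi}^{\alpha_g\ell(\beta^{-1}\gamma)}\ \ge\ \absolute{\pi}^{\alpha_g(\ell(\beta)+\ell(\gamma))},
\]
which is a \emph{lower} bound on the weight, not the upper bound you need. Indeed, the decay factor depends only on $\ell(\beta^{-1}\gamma)$, so for any fixed $\delta=\beta^{-1}\gamma$ there are infinitely many pairs $(\beta,\gamma)$ carrying the same exponential weight; the convergence of the outer sum over $\beta$ must therefore come from the behaviour of the integrals
\[
\iint_{F^2}\absolute{\beta x-\beta\delta y}^{-\alpha}\absolute{u(y)-u(x)}^2\absolute{\omega(y)}\absolute{\omega(x)}
\]
as $\beta$ varies, not from the word-length weight. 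The paper's proof is silent on precisely this point as well, so you are not introducing a new error, but your attempted justification does not work as written; if you want to close the gap you should instead analyse how the M\"obius action of $\beta$ rescales $\absolute{\beta x-\beta\delta y}$ (cf.\ Lemma~\ref{MoebiusTransformDistance} and Corollary~\ref{constantEqual}) rather than appeal to a length inequality.
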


\begin{proof}
Let $u\in\mathcal{D}(\Omega(K))^\Gamma$.
The value of $\mathcal{E}_\alpha(u)$
is
\begin{align*}
\mathcal{E}_\alpha(u)&=
\langle\mathcal{H}_\alpha u,\mathcal{H}_\alpha u\rangle_\omega
\\
&=\mu^\Gamma(F)^{-2}
\sum\limits_{\beta,\gamma\in\Gamma}
\absolute{\pi}^{2\alpha_g\ell(\beta^{-1}\gamma)}
\\
&\quad\cdot
\iint_{F^2}
\absolute{\beta x-\gamma y}^{-\alpha}\absolute{u(y)-u(x)}^2\absolute{\omega(y)}\absolute{\omega(x)}
\end{align*}
whose convergence is shown similarly as in the proof of Lemma \ref{denselyDefinedOperator}.
\end{proof}

Let $A=L^2(\Omega(K),\absolute{\omega})$
and
$\mathcal{H}_\alpha^*\colon A\to A^\Gamma$
the adjoint operator of $\mathcal{H}_\alpha\colon A^\Gamma\to A$.
Now, define
\[
\Delta_\alpha:=\mathcal{H}_\alpha^*\circ \mathcal{H}_\alpha\colon A^\Gamma\to A^\Gamma
\]
as an operator on $\Gamma$-invariant functions on $\Omega(K)$, or on functions on the Mumford curve $X(K)$,  which is the same thing.
There is also an operator
\[
\Delta_\alpha^\dagger:=\mathcal{H}_\alpha\circ\mathcal{H}_\alpha^*\colon A\to A
\]
for $\alpha>0$.

\begin{Lemma}
The operators $\mathcal{H}_\alpha$, $\mathcal{H}_\alpha^*$ are closed, the operators $\Delta_\alpha$ and $\Delta_\alpha^\dagger$ are self-adjoint, and the operators $I+\Delta_\alpha$, $I+\Delta_\alpha^\dagger$ have bounded inverses for $\alpha>0$.
\end{Lemma}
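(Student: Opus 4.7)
The plan is to reduce the statement to von Neumann's classical theorem: for a densely defined closed operator $T\colon H_1\to H_2$ between Hilbert spaces, both $T^*T$ and $TT^*$ are non-negative self-adjoint operators, and $I+T^*T$, $I+TT^*$ admit bounded inverses of operator norm at most one. Applied with $H_1=A^\Gamma$, $H_2=A$, and $T$ the closure of $\mathcal{H}_\alpha$, this simultaneously delivers every assertion of the lemma. The adjoint of any densely defined operator is automatically closed, so $\mathcal{H}_\alpha^*$ is closed without further work, by Lemma~\ref{denselyDefinedOperator}. The genuine task is to establish that $\mathcal{H}_\alpha$ is closable---equivalently, that $\mathcal{H}_\alpha^*$ is densely defined---so that one may legitimately replace $\mathcal{H}_\alpha$ by its closure.

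For closability, I would first rewrite, using $\Gamma$-invariance of $u$ and of $\omega$ together with the tessellation $\Omega(K)=\bigsqcup_\gamma \gamma F$,
\[
\mathcal{H}_\alpha u(w)=\int_{\Omega(K)} H_\alpha(w,z)\bigl(u(z)-u(w)\bigr)\,\absolute{\omega(z)},
\]
valid for $u\in A^\Gamma$. The kernel is symmetric in its two arguments because $\ell(\beta^{-1}\gamma)=\ell(\gamma^{-1}\beta)$ in the free group $\Gamma$. A formal Fubini manipulation on $\langle\mathcal{H}_\alpha u,v\rangle_\omega$ then identifies
\[
\mathcal{H}_\alpha^* v(x)=\sum_{\gamma\in\Gamma}\int_{\Omega(K)} H_\alpha(\gamma x,z)\bigl(v(z)-v(\gamma x)\bigr)\,\absolute{\omega(z)}
\]
for $x\in F$ and $v\in A$, which is the $\Gamma$-invariant average in the outer variable. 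For $v\in\mathcal{D}(\Omega(K))$, convergence of both the inner integral and the outer sum follows by the same geometric-series argument based on assumption \eqref{growthAssumption} and Lemma~\ref{ReducedWordCount} that already controls $\mathcal{H}_\alpha u$ in Lemma~\ref{denselyDefinedOperator}. Since $\mathcal{D}(\Omega(K))$ is dense in $A$, the adjoint $\mathcal{H}_\alpha^*$ is densely defined, hence $\mathcal{H}_\alpha$ is closable, and $\overline{\mathcal{H}_\alpha}=\mathcal{H}_\alpha^{**}$ is the required closed extension whose adjoint coincides with $\mathcal{H}_\alpha^*$.

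With $\overline{\mathcal{H}_\alpha}$ closed and $\mathcal{H}_\alpha^*$ closed, von Neumann's theorem delivers at once the self-adjointness and non-negativity of $\Delta_\alpha=\mathcal{H}_\alpha^*\overline{\mathcal{H}_\alpha}$ and $\Delta_\alpha^\dagger=\overline{\mathcal{H}_\alpha}\,\mathcal{H}_\alpha^*$, together with the bounded invertibility of $I+\Delta_\alpha$ and $I+\Delta_\alpha^\dagger$ with operator norms at most one. The main technical obstacle I expect is the Fubini step in identifying $\mathcal{H}_\alpha^*$: the integrand carries the diagonal singularity $\absolute{w-z}^{-\alpha}$ and an unbounded sum over $\Gamma$. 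I plan to deal with it by first invoking Tonelli on the absolute values and then dominating the resulting double sum of integrals by a convergent geometric series in $\absolute{\pi}^{\alpha_g}(2g-1)<1$, exactly as in the proof of Lemma~\ref{denselyDefinedOperator}; signed Fubini then applies on the same domain of integration.
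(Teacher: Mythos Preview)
Your overall architecture matches the paper's: reduce everything to von Neumann's theorem on $T^*T$ once a closed operator is in hand. The genuine difference lies in how closedness is obtained. The paper argues \emph{directly} via the sequential criterion: given $u_n\to u$ in $A^\Gamma$ with $\mathcal{H}_\alpha u_n\to v$ in $A$, it bounds $\norm{\mathcal{H}_\alpha u-\mathcal{H}_\alpha u_n}_\omega^2=\mathscr{E}_\alpha(u-u_n)$ and checks that the defining integrals tend to zero because $u_n-u\to 0$; this yields $\mathcal{H}_\alpha u=v$ and hence $u\in\dom(\mathcal{H}_\alpha)$. No adjoint computation is performed, and no Fubini/Tonelli step is needed.

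Your route instead establishes \emph{closability} by exhibiting a densely defined adjoint through an explicit integral formula obtained from a Fubini swap, and then works with the closure $\overline{\mathcal{H}_\alpha}$. This is a legitimate and in some respects cleaner functional-analytic path: density of $\dom(\mathcal{H}_\alpha^*)$ is a textbook closability criterion, and you avoid the mildly circular move of writing $\mathcal{H}_\alpha u$ before knowing $u\in\dom(\mathcal{H}_\alpha)$. The trade-off is that you incur the Tonelli justification for the kernel with its diagonal singularity and infinite $\Gamma$-sum, and you end up proving the lemma for $\overline{\mathcal{H}_\alpha}$ rather than for $\mathcal{H}_\alpha$ as stated; you should say explicitly either that $\mathcal{H}_\alpha$ is henceforth identified with its closure, or supply the extra step showing $\mathcal{H}_\alpha=\overline{\mathcal{H}_\alpha}$ on its natural domain. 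Either approach feeds identically into von Neumann's theorem for the self-adjointness of $\Delta_\alpha$, $\Delta_\alpha^\dagger$ and the bounded invertibility of $I+\Delta_\alpha$, $I+\Delta_\alpha^\dagger$.
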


\begin{proof}
In order to see that $\mathcal{H}_\alpha$ is closed, assume $u_n\in\dom(\mathcal{H}_\alpha)$ such that $u_n\to u\in L^2(\Omega(K),\absolute{\omega})^\Gamma$, and
$\mathcal{H}_\alpha u_n\to v\in L^2(\Omega(K),\absolute{\omega})$.
Then
\begin{align*}
\norm{\mathcal{H}_\alpha u - v}_{\omega}
\le
\norm{\mathcal{H}_\alpha u-\mathcal{H}_\alpha u_n}_{\omega}
+\norm{\mathcal{H}_\alpha u_n-v}_{\omega}
\end{align*}
and the second summand tends to zero by assumption. The square of the first summand is 
\begin{align*}
\norm{\mathcal{H}_\alpha u-\mathcal{H}_\alpha u_n}_{\omega}^2
&=\mathcal{E}_\alpha(u-u_n)\to 0
\end{align*}
because
\[
\int_F\absolute{\beta x-\gamma y}^{-\alpha}(u_n(y)-u(y)+u_n(x)-u(x))\absolute{\omega(y)}
\]
tends to zero for $n\to\infty$  for all $\beta,\gamma\in\Gamma$, as $u-u_n$ tends to the constant zero function.
It follows that $\mathcal{H}_\alpha u=v\in L^2(\Omega(K),\absolute{\omega})$, i.e. $u\in\dom(\mathcal{H}_\alpha)$ for $\alpha>0$. The closedness of the adjoint is now a standard fact, and
the remaining assertions follow from von Neumann's Theorem on the adjoint \cite[p.\ 200]{Yoshida1980}. 
\end{proof}

A consequence is that it is also possible to write
\[
\mathcal{E}_\alpha(u)=\langle\mathcal{H}_\alpha u,\mathcal{H}_\alpha u\rangle_\omega=
\langle\Delta_\alpha u,u\rangle_\omega
\]
for $u\in\dom(\mathcal{E}_\alpha)$ using the self-adjoint operator $\Delta_\alpha$ on $L^2(\Omega(K),\absolute{\omega})^\Gamma$.

\section{Spectrum}

A Kozrev wavelet is a function
\[
\psi_{B,j}(x)
=\mu_K(B)^{-\frac12}
\chi(\pi^{d-1}\tau(j)\,x)\Omega(x\in B)
\]
where $B\subset K$ is a disc of radius $\absolute{\pi}^{-d}$, $d\in\mathds{Z}$, $j\in (O_K/\pi O_K)^\times$, and $\tau\colon O_K/\pi O_K\to K$ a lift.
They were introduced by S.\ Kozyrev as an eigenbasis in $L^2(\mathds{Q}_p,\absolute{dx})$ for the $p$-adic Vladimirov operator \cite{Kozyrev2002}.

\begin{Lemma}\label{oscillatoryIntegralCircle}
It holds true that
\[
\int_{\absolute{x}=\absolute{\pi}^k}
\chi(ax)\absolute{x}^m\absolute{dx}
=\begin{cases}
\absolute{\pi}^{k(m+1)}(1-\absolute{\pi}),&\absolute{a}\le\absolute{\pi}^{-k}
\\
-\absolute{\pi}^{k(m+1)+1},&\absolute{a}=\absolute{\pi}^{-k-1}
\\
0,&\text{otherwise}
\end{cases}
\]
for $k,m\in\mathds{Z}$.
\end{Lemma}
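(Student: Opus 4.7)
The plan is to reduce the sphere integral to a difference of two ball integrals and then invoke the standard additive-character integral on a $p$-adic ball. Since $\absolute{x}^m$ is constant equal to $\absolute{\pi}^{km}$ on the annulus $\mathset{\absolute{x}=\absolute{\pi}^k}$, I would factor this out of the integral and reduce to computing
\[
I_k(a):=\int_{\absolute{x}=\absolute{\pi}^k}\chi(ax)\,\absolute{dx}.
\]
Writing the annulus as $\mathset{\absolute{x}\le\absolute{\pi}^k}\setminus\mathset{\absolute{x}\le\absolute{\pi}^{k+1}}$, and noting $\absolute{\pi}<1$, yields at once
\[
I_k(a)=J_k(a)-J_{k+1}(a),\qquad J_n(a):=\int_{\absolute{x}\le\absolute{\pi}^n}\chi(ax)\,\absolute{dx}.
\]

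For the ball integrals I would substitute $x=\pi^n u$ with $u\in O_K$, giving $J_n(a)=\absolute{\pi}^n\int_{O_K}\chi(\pi^n au)\,\absolute{du}$. The standard fact about the additive character $\chi$ on $K$ (trivial on $O_K$ but nontrivial on $\pi^{-1}O_K$) is that this inner integral is $1$ when $\pi^n a\in O_K$, i.e.\ $\absolute{a}\le\absolute{\pi}^{-n}$, and vanishes otherwise, the latter because $u\mapsto\chi(\pi^n au)$ is then a nontrivial character on the compact group $O_K$. Hence
\[
J_n(a)=\absolute{\pi}^n\,\Omega\bigl(\absolute{a}\le\absolute{\pi}^{-n}\bigr).
\]

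The final step is a case analysis on $\absolute{a}$, using that $\absolute{a}$ takes values only in integer powers of $\absolute{\pi}$ (or is $0$). If $\absolute{a}\le\absolute{\pi}^{-k}$, then automatically $\absolute{a}\le\absolute{\pi}^{-k-1}$, so both $J_k(a)$ and $J_{k+1}(a)$ contribute and $I_k(a)=\absolute{\pi}^k-\absolute{\pi}^{k+1}=\absolute{\pi}^k(1-\absolute{\pi})$. If $\absolute{a}=\absolute{\pi}^{-k-1}$, then $J_k(a)=0$ while $J_{k+1}(a)=\absolute{\pi}^{k+1}$, producing $I_k(a)=-\absolute{\pi}^{k+1}$. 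Otherwise $\absolute{a}\ge\absolute{\pi}^{-k-2}$ and both ball integrals vanish. Reinstating the prefactor $\absolute{\pi}^{km}$ gives exactly the three cases in the statement.

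The only potential obstacle is keeping the direction of the inequalities straight, since $\absolute{\pi}<1$ reverses the naive intuition about which of $\absolute{\pi}^k$ and $\absolute{\pi}^{k+1}$ is the larger radius; beyond this bit of bookkeeping, the argument is the classical $p$-adic Fourier computation.
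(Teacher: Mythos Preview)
Your proof is correct and follows the same reduction as the paper: factor out the constant $\absolute{\pi}^{km}$ on the sphere and then evaluate the $m=0$ integral $I_k(a)$. The paper simply cites \cite[Lem.\ 3.6]{Rogers2004} for that $m=0$ case, whereas you supply the standard self-contained computation via $I_k(a)=J_k(a)-J_{k+1}(a)$, so your argument is slightly more detailed but otherwise identical in strategy.
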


\begin{proof}
It holds true that
\[
\int_{\absolute{x}=\absolute{\pi}^k}
\chi(ax)\absolute{x}^m\absolute{dx}
=\absolute{\pi}^{km}
\int_{\absolute{x}=\absolute{\pi}^k}
\chi(ax)\absolute{dx}
\]
which shows how the assertion follows from the case $m=0$. That case is shown e.g.\ in \cite[Lem.\ 3.6]{Rogers2004} in the case $K=\mathds{Q}_p$. His proof carries over to general $K$ in a straightforward manner. 
\end{proof}

\begin{Lemma}\label{oscillatoryIntegralAroundZero}
Let $a\in K$ with $\absolute{a}=\absolute{\pi}^{d-1}$ for $d\in\mathds{Z}$, and let $m\in\mathds{N}$.
Then it holds true that
\[
\int_{\absolute{x}\le\absolute{\pi}^\ell}
\chi(ax)\absolute{x}^m
\absolute{dx}
=\begin{cases}
C(m)\absolute{\pi}^{\ell(m+1)} 
,&\ell\ge 1-d
\\
C(m)\absolute{\pi}^{(1-d)(m+1)}
-\absolute{\pi}^{1-d(m+1)}
,&\ell= -d
\\
0&\text{otherwise}
\end{cases}
\]
with
\[
C(m)=
\frac{\displaystyle
1-\absolute{\pi}}{\rule{0pt}{4mm}\displaystyle 1-\absolute{\pi}^{m+1}}
\]
In particular, the integral vanishes, if and only if $m=0$ and $\ell\le -d
$.
\end{Lemma}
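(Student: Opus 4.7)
The plan is to reduce the ball integral to the sphere integrals of Lemma \ref{oscillatoryIntegralCircle} via the standard annular decomposition. Writing
\[
\int_{\absolute{x}\le\absolute{\pi}^\ell}\chi(ax)\absolute{x}^m\absolute{dx}
=\sum_{k\ge\ell}\int_{\absolute{x}=\absolute{\pi}^k}\chi(ax)\absolute{x}^m\absolute{dx},
\]
the hypothesis $\absolute{a}=\absolute{\pi}^{d-1}$ converts the three regimes of Lemma \ref{oscillatoryIntegralCircle} into conditions on $k$ alone: the sphere integral equals $\absolute{\pi}^{k(m+1)}(1-\absolute{\pi})$ when $k\ge 1-d$, equals $-\absolute{\pi}^{k(m+1)+1}$ when $k=-d$, and vanishes when $k\le -d-1$.

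The formula then follows by a case analysis on $\ell$. If $\ell\ge 1-d$, only the first regime contributes and the geometric summation
\[
(1-\absolute{\pi})\sum_{k\ge\ell}\absolute{\pi}^{k(m+1)}
=(1-\absolute{\pi})\cdot\frac{\absolute{\pi}^{\ell(m+1)}}{1-\absolute{\pi}^{m+1}}
=C(m)\absolute{\pi}^{\ell(m+1)}
\]
yields the first branch. If $\ell=-d$, the isolated $k=-d$ sphere contributes $-\absolute{\pi}^{1-d(m+1)}$, and the tail $k\ge 1-d$ sums exactly as above to $C(m)\absolute{\pi}^{(1-d)(m+1)}$, producing the second branch. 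If $\ell\le -d-1$, the additional spheres $k\in[\ell,-d-1]$ are all null, so the total collapses once more to the same value as in the $\ell=-d$ case.

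For the ``in particular'' clause, I would use that $C(0)=1$: when $m=0$ the two summands of the second branch share the common exponent $1-d$ and cancel exactly, while for $m\ge 1$ the exponents $(1-d)(m+1)$ and $1-d(m+1)$ are distinct, leaving a nonzero residue. I expect the main obstacle to be purely bookkeeping, namely keeping the boundary values $\ell=1-d$ and $\ell=-d$ aligned between the two regimes and recognising that the nominally ``otherwise'' case $\ell\le -d-1$ is governed by the same expression as $\ell=-d$ (from which the vanishing criterion of the last clause can be read off uniformly). No analytic subtlety intervenes, since after discarding the vanishing spheres the sum reduces to a terminating geometric tail, and convergence is automatic from $m+1>0$.
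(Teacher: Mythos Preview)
Your approach is exactly the paper's: annular decomposition into spheres, invocation of Lemma~\ref{oscillatoryIntegralCircle}, and a case split on~$\ell$ with a geometric tail sum. The only point where you deviate is the ``otherwise'' case $\ell\le -d-1$: you correctly observe that the extra spheres $k\in[\ell,-d-1]$ contribute nothing, so the integral coincides with the $\ell=-d$ value $C(m)\absolute{\pi}^{(1-d)(m+1)}-\absolute{\pi}^{1-d(m+1)}$ rather than being literally~$0$ as the displayed formula claims. This is in fact the right computation and is what makes the ``in particular'' clause consistent (since that expression vanishes precisely when $m=0$); the paper's own proof asserts vanishing for $d<-\ell$ without restricting~$m$, which matches the stated formula but not the final clause for $m\ge 1$.
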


\begin{proof}
It holds true that
\[
\int_{\absolute{x}=\absolute{\pi}^\ell}
\chi(ax)\absolute{x}^m\absolute{dx}
=\sum\limits_{k=\ell}^\infty\int_{\absolute{x}=\absolute{\pi}^k}
\chi(ax)\absolute{x}^m\absolute{dx}
\]
and according to Lemma \ref{oscillatoryIntegralCircle}, the right hand side vanishes, if and only if $d<-\ell$, as asserted. If $d>-\ell$, then
the right hand side equals
\[
\sum\limits_{k=\ell}^\infty
\absolute{\pi}^{k(m+1)}(1-\absolute{\pi})
=C(m)\absolute{\pi}^{\ell(m+1)}
\]
as asserted.
In the remaining case that $d=-\ell$, it holds true that the right hand side equals
\[
-\absolute{\pi}^{1-d(m+1)}+\sum\limits_{k=1-d}
\absolute{\pi}^{k(m+1)}
=C(m)\absolute{\pi}^{(1-d)(m+1)}-\absolute{\pi}^{1-d(m+1)}
\]
again asserted.
\end{proof}

\begin{Lemma}\label{localHaar}
Let $B=B_\ell(a)\subset K$ be a disc not containing the $r$ points $a_1,\dots,a_r\in K$. then the polynomial 
\[
h(x)=\prod\limits_{i=1}^r(x-a_i)
\]
restricted to $B$ has the constant absolute value
\[
\absolute{h|_B(x)}=\prod\limits_{i=1}^r\absolute{a-a_i}
\]
for $x\in B$.
\end{Lemma}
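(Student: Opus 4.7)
The plan is to prove the claim factor-by-factor, since $|h(x)| = \prod_{i=1}^r |x - a_i|$, and then show each $|x - a_i|$ is the constant $|a - a_i|$ on $B$. The entire argument is an application of the strong triangle inequality (ultrametricity), and there is essentially no obstacle — the only subtle point is recognising that $a_i \notin B$ gives a \emph{strict} inequality between $|a_i - a|$ and the radius of $B$, which is exactly what allows the ultrametric inequality to collapse to an equality rather than just a bound.

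Concretely, let $\rho$ denote the radius of the disc $B = B_\ell(a)$, so that $x \in B$ is characterised by $|x - a| \le \rho$, while $a_i \notin B$ means $|a_i - a| > \rho$. Writing
\[
x - a_i = (x - a) - (a_i - a),
\]
the two summands satisfy $|x - a| \le \rho < |a_i - a|$. The ultrametric inequality in the form ``$|u + v| = \max\{|u|,|v|\}$ whenever $|u| \ne |v|$'' then gives
\[
|x - a_i| = |a_i - a| = |a - a_i|
\]
for every $i = 1, \dots, r$ and every $x \in B$.

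Taking the product over $i$ yields $|h(x)| = \prod_{i=1}^r |a - a_i|$ on $B$, which is independent of $x$, as claimed. The main (and only) step worth noting is the strict inequality $|a_i - a| > \rho$, which in the non-archimedean setting is automatic from $a_i \notin B$ because the complement of a closed disc of radius $\rho$ consists exactly of points of distance strictly greater than $\rho$ from the centre; no limiting or boundary argument is needed.
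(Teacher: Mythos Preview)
Your proof is correct and follows essentially the same approach as the paper: both use that $a_i\notin B$ gives the strict inequality $\absolute{a-a_i}>\absolute{\pi}^\ell$, then apply the ultrametric equality $\absolute{x-a_i}=\absolute{(x-a)+(a-a_i)}=\absolute{a-a_i}$ factor by factor.
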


\begin{proof}
Since $a_1,\dots,a_r$ are not in $B$, it follows that
\[
\absolute{a-a_i}>\absolute{\pi}^\ell
\]
for all $i=1,\dots,r$. Hence,
for all $x\in B$, it holds true that
\[
\absolute{x-a_i}=\absolute{x-a+a-a_i}=\absolute{a-a_i}
\]
for $i=1,\dots,r$. This proves the assertion.
\end{proof}

\begin{Corollary}\label{vanishingWaveletMeanNiceCase}
Let $\psi_{B,j}$ be a  Kozyrev wavelet on $\Omega(K)$. Then
\[
\int_B\psi_{B,j}(x)\absolute{\omega(x)}
\]
vanishes if 
$B$ does not contain any zero of $\omega$.
\end{Corollary}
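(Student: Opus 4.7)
My plan is to push $\int_B\psi_{B,j}(x)\,\absolute{\omega(x)}$ down to a standard character integral on a disc in $K$ and then apply Lemma~\ref{oscillatoryIntegralAroundZero}. Since $\infty\in\mathscr{L}$ and $B\subset\Omega(K)$ is a disc of radius $\absolute{\pi}^{-d}$, I will work in the affine chart $K\subset\mathds{P}^1(K)$ and write $\omega|_B=f(x)\,dx$ with $f$ a $K$-analytic function on a neighbourhood of $B$, so that $\absolute{\omega(x)}=\absolute{f(x)}\,\absolute{dx}$ on $B$. Substituting the definition of $\psi_{B,j}$ then gives
\[
\int_B\psi_{B,j}(x)\,\absolute{\omega(x)}=\mu_K(B)^{-\frac12}\int_B\chi\!\left(\pi^{d-1}\tau(j)\,x\right)\absolute{f(x)}\,\absolute{dx}.
\]

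The key step is to show that $\absolute{f}$ is constant on $B$. Because $B\cap V(\omega)=\emptyset$, the analytic function $f$ is zero-free on the closed disc $B$ of radius $\absolute{\pi}^{-d}$. Fixing any centre $x_0\in B$ and expanding $f(x)=\sum_{n\ge 0}a_n(x-x_0)^n$, the absence of zeros forces $\absolute{a_0}>\absolute{a_n}\,\absolute{\pi}^{-nd}$ for every $n\ge 1$ by the Newton-polygon criterion, and the non-archimedean strong triangle inequality then yields $\absolute{f(x)}=\absolute{a_0}=:c>0$ for all $x\in B$. This is the analytic counterpart of Lemma~\ref{localHaar}, and it is the only point in the argument where the hypothesis $B\cap V(\omega)=\emptyset$ is used.

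Pulling $c$ out of the integral and translating via $x=x_0+y$, using that $\tau(j)\in O_K^\times$ so that $\absolute{\pi^{d-1}\tau(j)}=\absolute{\pi}^{d-1}$, yields
\[
\int_B\psi_{B,j}(x)\,\absolute{\omega(x)}=\mu_K(B)^{-\frac12}\,c\,\chi\!\left(\pi^{d-1}\tau(j)\,x_0\right)\int_{\absolute{y}\le\absolute{\pi}^{-d}}\chi\!\left(\pi^{d-1}\tau(j)\,y\right)\absolute{dy}.
\]
Applying Lemma~\ref{oscillatoryIntegralAroundZero} with $a=\pi^{d-1}\tau(j)$, $m=0$ and $\ell=-d$ puts the parameters in exactly the boundary case $\ell=-d$, where the stated value is $C(0)\absolute{\pi}^{1-d}-\absolute{\pi}^{1-d}=0$. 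Hence the remaining integral vanishes and the corollary follows. The main obstacle I anticipate is precisely the justification that $\absolute{f}$ is constant on $B$: it must be argued directly from non-archimedean power-series theory, since $f$ is in general only analytic and Lemma~\ref{localHaar} applies verbatim only to polynomials.
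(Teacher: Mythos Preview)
Your proof is correct and follows the same two-step strategy as the paper's: first establish that $\absolute{\omega}$ is constant on $B$ (the paper invokes Lemma~\ref{localHaar}; you give a Newton-polygon argument that handles analytic rather than merely polynomial $f$), then use the mean-zero property of Kozyrev wavelets against Haar measure (the paper cites Kozyrev's theorem; you derive it directly from Lemma~\ref{oscillatoryIntegralAroundZero} in the boundary case $\ell=-d$, $m=0$). The added care on the first step is a genuine improvement, since Lemma~\ref{localHaar} as stated applies only to polynomials, whereas the local coefficient of $\omega$ is a priori only analytic.
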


\begin{proof}
This follows immediately from Lemma \ref{localHaar} and the well-known result by Kozyrev, cf.\ 
\cite[Thm.\ 3.29]{XKZ2018}
or
\cite[Thm.\ 9.4.2]{AXS2010}.
\end{proof}

Let $\omega$ be a $\Gamma$-invariant regular $1$-form on $\Omega(K)$. Then, according to Lemma \ref{localHaar},
\begin{align}\label{constantFactor}
\absolute{\omega(x)}
=C_B\absolute{dx}
\end{align}
where
\[
C_B=C\cdot\prod\limits_{i=1}^r
\absolute{x-a_i}
\]
for some $C>0$ and $a_1,\dots,a_r\in F$ are the zeros of $\omega$ in $F$.

\begin{Corollary}\label{constantEqual}
It holds true that
\[
C_B=C_{\beta B}
\quad\text{and}\quad
\absolute{\beta'(x)}=1
\]
for all $\beta\in \Gamma$ and $x\in \Omega(K)\setminus V(\omega)$, where $B\subset \Omega(K)\setminus V(\omega)$ is a disc. 
\end{Corollary}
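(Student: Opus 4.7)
The plan is to combine the $\Gamma$-invariance of $\omega$ with the ultrametric behavior of M\"obius transformations on discs, together with the explicit formula (\ref{constantFactor}). First, observe that any $\beta\in \Gamma\subset \PGL_2(K)$ written as $\beta(x)=(ax+b)/(cx+d)$ has derivative $\beta'(x)=(ad-bc)/(cx+d)^2$. Since the pole $-d/c$ of $\beta'$ lies in the limit set $\mathscr{L}$, it is outside $\Omega(K)$, so on any disc $B\subset \Omega(K)\setminus V(\omega)$ the quantity $\absolute{cx+d}$ is a positive constant by the ultrametric triangle inequality, and therefore $\absolute{\beta'(x)}$ takes a single value throughout $B$.

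The $\Gamma$-invariance $\beta^*\omega=\omega$, read on coefficients by writing $\omega=g(x)\,dx$, is $g(\beta x)\beta'(x)=g(x)$. Taking absolute values on $B$ and using Lemma \ref{localHaar} to conclude $\absolute{g}\equiv C_B$ on $B$ and $\absolute{g}\equiv C_{\beta B}$ on $\beta B$ (and noting that $\beta B\subset \Omega(K)\setminus V(\omega)$ since the zero set $V(\omega)$ is $\Gamma$-invariant), I would obtain the single equation
\[
C_B=\absolute{\beta'(x)}\cdot C_{\beta B}
\]
valid for all $x\in B$. This already reduces the two assertions to a single one: showing $\absolute{\beta'(x)}=1$.

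To close the argument I would use (\ref{constantFactor}) in the explicit form $C_B=C\prod_{i=1}^r\absolute{x-a_i}$ together with its analogue on $\beta B$: since $V(\omega)$ is $\Gamma$-invariant, the zeros of $\omega$ in $\beta F$ are the translates $\beta a_1,\dots,\beta a_r$, and the same global constant $C$ appears, so $C_{\beta B}=C\prod_i\absolute{\beta x-\beta a_i}$. Using the classical M\"obius identity $\beta x-\beta a_i=(ad-bc)(x-a_i)/((cx+d)(ca_i+d))$, I would relate the product $\prod_i \absolute{\beta x - \beta a_i}$ back to $\prod_i\absolute{x-a_i}$ weighted by factors of $\absolute{cx+d}$, $\absolute{ca_i+d}$ and $\absolute{ad-bc}$. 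Substituting into the relation above and cancelling the common factor $C\prod_i\absolute{x-a_i}$ should leave exactly the equation $\absolute{\beta'(x)}=1$.

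The main obstacle will be verifying that the M\"obius factors combine correctly: the power $\absolute{cx+d}^{-r}$ and the product $\prod_i\absolute{ca_i+d}^{-1}$, both coming from the denominators, together with $\absolute{ad-bc}^{r}$ from the numerators, must produce exactly the right homogeneity to force $\absolute{\beta'(x)}=1$ rather than merely a constraint on it. This rigidity reflects the fact that the constant $C$ of (\ref{constantFactor}) is genuinely global, independent of the disc $B$, a property coming from the $\Gamma$-equivariance of $\omega$; without this global consistency, the argument would stall at the mere proportionality $C_B=\absolute{\beta'(x)}\cdot C_{\beta B}$.
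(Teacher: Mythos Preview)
Your first two steps are sound: the pole $-d/c$ of $\beta'$ lies in the limit set, so $\absolute{\beta'}$ is constant on $B$; and the invariance $g(\beta x)\beta'(x)=g(x)$ yields the single relation $C_B=\absolute{\beta'(x)}\,C_{\beta B}$. That relation is precisely what $\Gamma$-invariance of $\omega$ contributes, and it is \emph{all} it contributes. The trouble is in step~3. Carrying out your own computation with $\absolute{\beta x-\beta a_i}=\absolute{ad-bc}\,\absolute{x-a_i}/\bigl(\absolute{cx+d}\,\absolute{ca_i+d}\bigr)$ and your assumption that the same constant $C$ governs $\beta F$, one gets
\[
\frac{C_{\beta B}}{C_B}=\frac{\absolute{ad-bc}^{\,r}}{\absolute{cx+d}^{\,r}\prod_i\absolute{ca_i+d}},
\qquad\text{while step 2 gives}\qquad
\frac{C_{\beta B}}{C_B}=\frac{\absolute{cx+d}^{2}}{\absolute{ad-bc}}.
\]
Equating these produces $\absolute{ad-bc}^{\,r+1}=\absolute{cx+d}^{\,r+2}\prod_i\absolute{ca_i+d}$, which is \emph{not} the equation $\absolute{\beta'(x)}=1$ except when $r=0$; for $r\ge1$ the residual factors $\prod_i\absolute{ca_i+d}$ do not cancel and nothing in your argument pins them down. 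The ``right homogeneity'' you anticipate is simply not there.

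The deeper issue is that your assumption---that the constant $C$ in (\ref{constantFactor}) is the same on $\beta F$ as on $F$---is unjustified and essentially circular. That constant is the value of $\absolute{g(x)}/\prod_i\absolute{x-a_i}$ on $F$, and claiming it agrees with its counterpart on $\beta F$ is tantamount to what you want to prove. The paper does not pass through the product formula here at all: it argues $\absolute{\beta'}=1$ directly by comparing the invariant measure $\absolute{\omega}$ on $B$ with its transform under $\beta$ (obtaining $C_B\,\mu_K=C_B\,\absolute{\beta'}^{-1}\mu_K$), and only then reads off $C_B=C_{\beta B}$ from your step~2 relation. Whether one regards the paper's measure-theoretic step as fully transparent is a separate matter; the point for your proposal is that routing the argument through the zeros $a_i$ introduces new unknowns (the constant on $\beta F$, the values $\absolute{ca_i+d}$) rather than eliminating them, so that route cannot close.
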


\begin{proof}
Assume w.l.o.g.\ that $B\subset F$.
The first statement now follows immediately from
\[
C_B\absolute{dx}=\absolute{\omega(x)}
=\absolute{\omega(\beta x)}
=C_{\beta B}\absolute{dx}
\]
for $\beta\in\Gamma$, because, since $\omega$ is $\Gamma$-invariant, $\beta B$ also does  not contain any zeros of $\omega$, and a similar reasoning as in the proof of Lemma \ref{localHaar} can be used. This also explains why the constant factor $C_{\beta B}$ exists in the first place.

\smallskip
Now, $\beta'$ does not have any zeros or poles in $B$, because the zeros and poles of $\beta'$ are zeros or poles of $\omega$. But $B$ is away from the zeros of $\omega$, and $\omega$ is a regular differential form, i.e.\  has no poles.
Hence, $\absolute{\beta'}$ is readily seen to be locally constant on $B$.
From the $\Gamma$-invariance of $\omega$, it follows that this is actually  constant equalling to one, because
\begin{align*}
C_{\tilde{B}}\,\mu_K
=\absolute{\omega}=
\absolute{\omega\circ\beta^{-1}}
=C_{\tilde{B}}\absolute{\beta'}^{-1}\mu_K
\end{align*}
as measures on $B$. This implies that $\absolute{\beta'(z)}=1$ for $z\in B$. But since $\Omega(K)\setminus V(\Omega)$ can be covered by discs, it follows that $\absolute{\beta'(x)}=1$ for all $x\in \Omega(K)\setminus V(\omega)$.
\end{proof}

\begin{remark}
If $B$ contains a zero of $\omega$, then it does happen that the $\absolute{\omega}$-mean of a Kozyrev wavelet supported in $B$ does not vanish, as can be seen in the case of $0\in V(\omega)$ and $B$ a small disc containing $0$, by using Lemma \ref{oscillatoryIntegralAroundZero}. However, it is not clear to the author whether this holds true in all cases, i.e.\ the converse implication in Corollary \ref{vanishingWaveletMeanNiceCase} might possibly not hold true.
\end{remark}

\begin{Lemma}\label{MoebiusTransformDistance}
Let $\gamma\in\Gamma$. Then
\[
\gamma(x)-\gamma(y)=
\gamma'(x)^{\frac12}\gamma'(y)^{\frac12}(x-y)
\]
for a suitable choice of square root in $K$.
\end{Lemma}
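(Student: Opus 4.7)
My approach is a direct computation using the matrix representation of $\gamma$. Write $\gamma(z)=\frac{az+b}{cz+d}$ for some lift of $\gamma\in\PGL_2(K)$ to $\GL_2(K)$, and denote $\Delta=ad-bc$. A short algebraic manipulation gives
\[
\gamma(x)-\gamma(y)=\frac{(ax+b)(cy+d)-(ay+b)(cx+d)}{(cx+d)(cy+d)}=\frac{\Delta(x-y)}{(cx+d)(cy+d)},
\]
since the cross terms in the numerator cancel and only the $ad(x-y)-bc(x-y)$ contribution survives. Separately, the quotient rule yields
\[
\gamma'(z)=\frac{\Delta}{(cz+d)^2},
\]
so that $\gamma'(x)\gamma'(y)=\Delta^2/\bigl((cx+d)^2(cy+d)^2\bigr)$.

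Fixing any square root $\sqrt{\Delta}\in K$ (passing to a quadratic extension if necessary, though for the Schottky setup one can always pick a lift in $\SL_2(K)$ when $\Delta$ happens to be a square), I would then \emph{define}
\[
\gamma'(z)^{1/2}:=\frac{\sqrt{\Delta}}{cz+d},
\]
which is one of the two algebraic square roots of $\gamma'(z)$. With this choice,
\[
\gamma'(x)^{1/2}\gamma'(y)^{1/2}=\frac{\Delta}{(cx+d)(cy+d)},
\]
and comparing with the first display yields the asserted identity. Independence of the lift is automatic: rescaling the matrix by $\lambda$ multiplies $\Delta$ by $\lambda^{2}$ and $cz+d$ by $\lambda$, so both sides of the identity transform in the same way.

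The only genuinely subtle point, which the clause ``for a suitable choice of square root'' is there to absorb, is the sign coordination: $\gamma'(x)^{1/2}$ and $\gamma'(y)^{1/2}$ are each determined only up to a sign, and the identity requires that these signs be chosen compatibly so that their product matches $\Delta/((cx+d)(cy+d))$ rather than its negative. The explicit uniform formula $\gamma'(z)^{1/2}=\sqrt{\Delta}/(cz+d)$ performs exactly this coordination once $\sqrt{\Delta}$ is fixed, so there is no real obstacle: the lemma is essentially the well-known projective-geometric identity for M\"obius transformations, and the computation is only a few lines.
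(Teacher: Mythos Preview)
Your proof is correct and follows essentially the same route as the paper: a direct computation using a matrix representative, the algebraic identity $\gamma(x)-\gamma(y)=\dfrac{\Delta(x-y)}{(cx+d)(cy+d)}$, and the derivative formula $\gamma'(z)=\Delta/(cz+d)^2$. The only cosmetic difference is that the paper picks the lift in $\SL_2(K)$ from the outset (so $\Delta=1$ and the square-root issue disappears), whereas you keep a general $\GL_2(K)$ lift and absorb the resulting $\sqrt{\Delta}$ into the definition of $\gamma'(z)^{1/2}$; your remark on sign coordination and scaling invariance is a nice bonus the paper leaves implicit.
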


\begin{proof}
Assume that $\gamma\in\Gamma$ is represented by a matrix
\[
A=\begin{pmatrix}a&b\\c&d\end{pmatrix}\in \SL_2(K)
\]
Then
\[
\gamma'(z)=\frac{1}{(cz+d)^2}
\]
and
\begin{align*}
\gamma(x)-\gamma(y)&=
\frac{ax+b}{cx+d}-\frac{ay+b}{cy+d}
=\frac{(ax+b)(cy+d)-(ay+b)(cx+d)}{(cx+d)(cy+d)}
\\
&=\frac{(ad-bc)(x-y)}{(cx+d)(cy+d)}
=\gamma'(x)^{\frac12}\gamma'(y)^\frac12(x-y)
\end{align*}
for a suitable choice of square roots in $K$, as asserted.
\end{proof}



\begin{Lemma}\label{invariantKernel}
Let $x,y\in\Omega(K)\setminus V(\omega)$.
Then
\begin{align*}
\absolute{\beta x-\gamma y}&=\absolute{x-\beta^{-1}\gamma y}
\end{align*}
for $\beta,\gamma\in\Gamma$.
\end{Lemma}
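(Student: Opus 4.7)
The approach is to realise every $\beta \in \Gamma$ as an isometry on discs disjoint from $V(\omega)$, so that pulling both arguments in the distance back by $\beta^{-1}$ is distance-preserving. First I would reparametrise the left-hand side by the change of variable $z = \beta^{-1}\gamma y$ (so that $\gamma y = \beta z$ with $z \in \beta^{-1}\gamma B$) and write
\[
\dist(\beta B, \gamma B) = \inf_{x \in B,\, y \in B} |\beta x - \gamma y| = \inf_{x \in B,\, z \in \beta^{-1}\gamma B} |\beta x - \beta z|.
\]

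Next I would apply Lemma \ref{MoebiusTransformDistance} with $\gamma$ replaced by $\beta$ to obtain
\[
|\beta x - \beta z| = |\beta'(x)|^{\frac12}|\beta'(z)|^{\frac12}|x - z|
\]
for every $x \in B$ and $z \in \beta^{-1}\gamma B$. The essential observation is that both $B$ and $\beta^{-1}\gamma B$ lie in $\Omega(K)\setminus V(\omega)$: for $B$ this is the hypothesis, and for $\beta^{-1}\gamma B$ it follows from the $\Gamma$-invariance of $\omega$, which makes $V(\omega)$ a $\Gamma$-invariant subset of $\Omega(K)$. Consequently Corollary \ref{constantEqual} applies to both discs and yields $|\beta'(x)| = 1 = |\beta'(z)|$ for all the relevant points.

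Substituting back gives $|\beta x - \beta z| = |x - z|$ pointwise, and taking the infimum over $x \in B$ and $z \in \beta^{-1}\gamma B$ produces the desired identity $\dist(\beta B, \gamma B) = \dist(B, \beta^{-1}\gamma B)$. The only potential obstacle is checking that Corollary \ref{constantEqual} genuinely applies to $\beta^{-1}\gamma B$ and not only to subsets of the specified fundamental domain $F$; since the corollary's proof uses only that the disc avoids $V(\omega)$ (inherited from $\omega$ being regular and $\Gamma$-invariant), this extension is immediate and poses no real difficulty.
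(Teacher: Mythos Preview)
Your proposal is correct and follows essentially the same route as the paper: both arguments reduce the identity to Lemma~\ref{MoebiusTransformDistance} together with Corollary~\ref{constantEqual}, the only cosmetic difference being that the paper applies $\beta^{-1}$ to the pair $(\beta x,\gamma y)$ whereas you first substitute $z=\beta^{-1}\gamma y$ and then apply $\beta$ in the forward direction. Your remark that Corollary~\ref{constantEqual} applies to $\beta^{-1}\gamma B$ because $V(\omega)$ is $\Gamma$-invariant is exactly what is needed, and your use of the infimum cleanly absorbs the case $\beta=\gamma$ that the paper handles with an ad hoc comment.
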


\begin{proof}
Let $x,y\in \Omega(K)\setminus V(\omega)$. Then
\begin{align*}
\absolute{x-\beta^{-1}\gamma y}
&=\absolute{\beta^{-1}\beta x-\beta^{-1}\gamma y}
\\
&=\absolute{\beta x-\gamma y}\absolute{\beta'(\beta x)}^{-\frac12}\absolute{\beta'(\gamma y)}^{-\frac12}
\end{align*}
for $\beta,\gamma\in\Gamma$. Hence, the assertion follows from Corollary \ref{constantEqual}.
\end{proof}



\begin{Lemma}\label{eigenfunction}
It  holds true that
\begin{align*}
\int_F\absolute{x-y}^{-\alpha}&(\psi_{B,j}(y)-\psi_{B,j}(x))\absolute{\omega(y)}
\\
&=
-\left(
\int_{F\setminus B}\absolute{x-y}^{-\alpha}\absolute{\omega(y)}
+\mu^\Gamma(B)^{1-\alpha}
\right)
\psi_{B,j}(x)
\end{align*}
for $x\in K$, $B\subset \Omega(K)\setminus V(\omega)$ a disc, and $j\in O_K/\pi O_K$.
\end{Lemma}

\begin{proof}
This follows from \cite[Thm.\ 3]{Kozyrev2004}, as the conditions for that theorem to be valid are satisfied.
\end{proof}

A Kozyrev wavelet $
\psi_{B,j}(x)$ supported on a disc $B\subset F$ can be extended to a $\Gamma$-invariant function
\[
\psi_{B,j}^\Gamma(\gamma x):=\psi_{B,j}(x)
\]
for all $\gamma\in\Gamma$.
Call this function a \emph{$\Gamma$-invariant Kozyrev wavelet}.
\newline

Define the number
\[
N_F(B):=
\absolute{\mathset{\text{discs $\tilde B\subset F$}
\mid\mu^\Gamma(\tilde{B})=\mu^\Gamma(B)\;\wedge
\;\forall\gamma\in\Gamma\colon I_F(\gamma\tilde{B})=I_F(\gamma B)
}}
\]
for a given disc $B\subset F$ and
\[
I_F(\gamma B):=\int_{F\setminus B}\absolute{x-\gamma y}^{-\alpha}\absolute{\omega(y)}
\]
for $\gamma\in\Gamma$, $\alpha>0$.

\begin{thm}\label{spectrum}
The space $L^2(\Omega(K)\setminus V(\omega),\absolute{\omega})^\Gamma$ of $\Gamma$-invariant $L^2$-functions
on $\Omega(K)\setminus V(\omega)$
has an orthonormal basis consisting of the $\Gamma$-periodic wavelets $\psi_{B,j}^\Gamma$ supported in $\Omega(K)\setminus V(\omega)$, and these are eigenfunctions of $\Delta_\alpha^\frac12$ for $\alpha>0$. 
The eigenvalue 
corresponding to $\psi_{B,j}^\Gamma$ is
\[
\lambda_B=
\mu^\Gamma(F)^{-1}
\sum\limits_{\gamma\in\Gamma}
\absolute{\pi}^{\alpha_g\ell(\gamma)}
\left(
\int_{F\setminus B}\absolute{x-\gamma y}^{-\alpha}\absolute{\omega(y)}
+
\mu^\Gamma(B)^{1-\alpha}
\right)
\]
for $j\in O_K/\pi O_K\setminus\mathset{0}$,  $B\subset F\setminus V(\omega)$ a disc whose $\Gamma$-translates form the support of $\psi_{B,j}$, and 
 $F$ a good fundamental domain for the action of $\Gamma$. The multiplicity of eigenvalue $\lambda_B$ is
$N_F(B)\cdot(p^f-1)
$. Both, $\lambda_B$ and its multiplicity, are invariant under replacing $F$ with $\gamma F$ for any $\gamma\in \Gamma$.
The restriction of $\Delta_\alpha^\frac12$ to $L^2(\Omega(K)\setminus V(\omega),\absolute{\omega})^\Gamma$ coincides with $-\mathcal{H}_\alpha$ for $\alpha>0$.
\end{thm}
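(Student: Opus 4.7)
The plan is to prove the theorem in three stages—constructing the orthonormal basis, computing the eigenvalue action of $\mathcal{H}_\alpha$ on a single wavelet, and deriving the multiplicity, the invariance under $F\to\gamma F$, and the identification $\Delta_\alpha^{\frac12}=-\mathcal{H}_\alpha$. For the basis claim I would first note that restriction to $F$ gives an isometry $L^2(\Omega(K)\setminus V(\omega),\absolute{\omega})^\Gamma\cong L^2(F\setminus V(\omega),\absolute{\omega})$. By equation (\ref{constantFactor}) the measure $\absolute{\omega}$ equals the positive constant $C_B$ times Haar measure $\absolute{dx}$ on every disc $B\subset F\setminus V(\omega)$, so the classical Kozyrev ONB of $L^2(K,\absolute{dx})$ transfers (after rescaling each $\psi_{B,j}$ by $C_B^{-\frac12}$) to an ONB of the right-hand space, and extension by $\Gamma$-invariance produces the $\psi_{B,j}^\Gamma$.

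For the eigenvalue computation I would take $x\in B$ and substitute $z=\gamma y$ in the defining sum, using $\Gamma$-invariance of $\absolute{\omega}$ to rewrite
\[
\mathcal{H}_\alpha\psi_{B,j}^\Gamma(x)=\mu_K(F)^{-1}\sum_{\gamma\in\Gamma}\absolute{\pi}^{\alpha_g\ell(\gamma)}\int_{\gamma F}\absolute{x-z}^{-\alpha}\bigl(\psi_{B,j}^\Gamma(z)-\psi_{B,j}(x)\bigr)\absolute{\omega(z)}.
\]
The crucial geometric input is that, for a Schottky good fundamental domain, each $\gamma F$ with $\gamma\ne1$ lies in a single $p$-adic disc disjoint from $F$, so by the ultrametric $\absolute{x-z}$ is constant on $\gamma F$ and equals $\dist(B,\gamma B)$; this is the same ultrametric mechanism underlying Lemmas \ref{invariantKernel} and \ref{noy}. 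That constancy factors each $\gamma\ne1$ integral, and then Corollary \ref{vanishingWaveletMeanNiceCase} combined with $\Gamma$-invariance of $\absolute{\omega}$ collapses it to the claimed $\dist(B,\gamma B)^{-\alpha}$-contribution. The singular $\gamma=1$ integral is regularised via (\ref{constantFactor}) and Lemma \ref{eigenfunction} to produce $-C_B\absolute{\pi}^{-d}\psi_{B,j}(x)$. Collecting all $\gamma$ and using Lemma \ref{ReducedWordCount} under the growth hypothesis (\ref{growthAssumption}) to secure absolute convergence gives $\mathcal{H}_\alpha\psi_{B,j}^\Gamma(x)=-\lambda_B\psi_{B,j}(x)$; for $x\in F\setminus B$, $\psi_{B,j}(x)=0$ and the wavelet mean property forces the right-hand side to vanish as well.

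For the remaining statements, the explicit formula shows $\lambda_B$ depends on $B$ only through its radius, the factor $C_B$, and the $\Gamma$-sum, so each eigenspace has dimension $N_F(B)\cdot(p^f-1)$, the first factor counting discs of the same radius with the same $C_B$ and the second counting the admissible labels $j\in O_K/\pi O_K\setminus\mathset{0}$. Reindexing $\gamma'\mapsto\beta\gamma'$ in the $\Gamma$-sum combined with Lemma \ref{invariantKernel} and Corollary \ref{constantEqual} ($C_{\beta B}=C_B$) gives the invariance of $\lambda_B$ and its multiplicity under $F\to\beta F$ for any $\beta\in\Gamma$. Finally, $\ell(\beta^{-1}\gamma)=\ell(\gamma^{-1}\beta)$ makes $H_\alpha$ symmetric in its two arguments, so $\mathcal{H}_\alpha$ is self-adjoint on $L^2(\Omega(K)\setminus V(\omega),\absolute{\omega})^\Gamma$; since its computed spectrum is non-positive, $-\mathcal{H}_\alpha$ is the positive self-adjoint square root of $\Delta_\alpha=\mathcal{H}_\alpha^*\mathcal{H}_\alpha$ on that subspace.

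The hard part will be the simultaneous treatment of the singular $\gamma=1$ regularisation and the remote $\gamma\ne1$ series. The former requires Lemma \ref{eigenfunction} applied through (\ref{constantFactor}); the latter relies on the Schottky-specific fact that $\gamma F$ sits inside a single disc at distance $\dist(B,\gamma B)$ from $B$. Showing that these two mechanisms combine so that the sum closes exactly into the stated $\lambda_B$—without leaving residual integrals over $F\setminus B$—is the central technical step, and it is here that the growth condition (\ref{growthAssumption}) on $\alpha_g$ becomes indispensable to ensure absolute convergence of the $\Gamma$-sum.
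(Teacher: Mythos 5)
Your overall architecture --- orthonormal basis via restriction to $F$ and the constancy of $\absolute{\omega}$ on discs, term-by-term evaluation of the $\gamma$-sum, the multiplicity count $N_F(B)\cdot(p^f-1)$, invariance under $F\to\gamma F$, and the identification $\Delta_\alpha^{\frac12}=-\mathcal{H}_\alpha$ via invariance of the subspace --- parallels the paper's. The gap is in the central eigenvalue computation. After your substitution, the $\gamma\neq 1$ term reads
\[
\int_{\gamma F}\absolute{x-z}^{-\alpha}\bigl(\psi_{B,j}^\Gamma(z)-\psi_{B,j}(x)\bigr)\absolute{\omega(z)}
=\dist(B,\gamma B)^{-\alpha}\Bigl(\int_{\gamma F}\psi_{B,j}^\Gamma\,\absolute{\omega}\;-\;\psi_{B,j}(x)\,\absolute{\omega}(\gamma F)\Bigr),
\]
and while the first summand vanishes by Corollary \ref{vanishingWaveletMeanNiceCase} and $\Gamma$-invariance, the second does not: it contributes $-\dist(B,\gamma B)^{-\alpha}\absolute{\omega}(F)\,\psi_{B,j}(x)$, with coefficient $\absolute{\omega}(F)$ rather than the $C_B\absolute{\pi}^{-d}=\absolute{\omega}(B)$ that appears in $\lambda_B$. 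Likewise your $\gamma=1$ term is an integral over all of $F$, whereas Lemma \ref{eigenfunction} only evaluates the piece over $B$; the leftover $-\psi_{B,j}(x)\int_{F\setminus B}\absolute{x-y}^{-\alpha}\absolute{\omega(y)}$ is a nonzero constant multiple of $\psi_{B,j}(x)$ that your regularisation does not address. Carried out literally, your steps still exhibit $\psi_{B,j}^\Gamma$ as an eigenfunction (every extra term is a constant multiple of $\psi_{B,j}(x)$ for $x\in B$, and for $x\in F\setminus B$ everything vanishes by constancy of the kernel on discs plus the mean-zero property), but they yield an eigenvalue that differs from the stated $\lambda_B$; the ``collapse to the claimed contribution'' is asserted rather than derived.

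The paper's proof diverges from yours at exactly this point: it evaluates $\mathcal{H}_\gamma^\alpha\psi_{B,j}^\Gamma(\beta x)$, up to the prefactor, as $C_{\gamma B}\int_{\gamma B}\absolute{\beta x-y}^{-\alpha}(\psi_{B,j}^\Gamma(y)-\psi_{B,j}^\Gamma(x))\absolute{dy}$, i.e.\ with the integration restricted to $\gamma B$ from the outset, which is precisely where the factor $C_B\absolute{\pi}^{-d}$ in $\lambda_B$ comes from. To make your route close up you would have to justify discarding the contribution of $\gamma F\setminus\gamma B$ (equivalently, of $F\setminus B$ before the substitution), on which the integrand equals $-\psi_{B,j}(x)$ times the kernel; neither the mean-zero property nor the ultrametric constancy you invoke does this, so the step as written fails to produce the stated formula.
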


This is Theorem 1.

\begin{proof}
The $\Gamma$-invariant function $\psi_{B,j}^\Gamma(x)$ is an element of $L^2(\Omega(K)\setminus V(\omega),\absolute{\omega})^\Gamma$, because 
\begin{align*}
\int_{\Omega(K)}\psi_{B,j}^\Gamma(x)\,\absolute{\omega(x)}
&=\sum\limits_{\gamma\in\Gamma}\int_{B}\psi_{B,j}^\Gamma(\gamma x)\absolute{\omega(x)}
=\sum\limits_{\gamma\in\Gamma}
\int_B\psi_{B,j}(x)\absolute{\omega(x)}=0
\end{align*}
where the last equation holds true by Corollary \ref{vanishingWaveletMeanNiceCase}.
Since any $\Gamma$-periodic $L^2$-function on $\Omega(K)\setminus V(\omega)$ has to  have mean zero, it now follows that the space of $\Gamma$-periodic $L^2$-functions on that space is spanned by the $\Gamma$-invariant Kozyrev wavelets supported in $\Omega(K)\setminus V(\omega)$, as these are in $1-1$-correspondence with the Kozyrev wavelets which are an orthonormal basis of $L^2(F\setminus V(\omega),\mu_K)$. Notice that the measure $\absolute{\omega}$ differs from $\mu_K$ on the support of any Kozyrev wavelet only by a constant factor according to (\ref{constantFactor}).
Therefore, the different choices of measures for those Hilbert spaces are not an issue.

\smallskip
Now, let $\beta,\gamma\in\Gamma$. Then
\begin{align*}
\mu^\Gamma(F)&\absolute{\pi}^{-\alpha_g\ell(\beta^{-1}\gamma)}\,\mathcal{H}_\gamma^\alpha\psi_{B,j}^\Gamma(\beta x)
\\
&\stackrel{\text{Lem.\ \ref{invariantKernel}}}{=}
\int_{F}\absolute{x-\beta^{-1}\gamma y}^{-\alpha}
\left(\psi_{B,j}^\Gamma(y)-\psi_{B,j}^\Gamma(x)\right)\absolute{\omega(y)}
\\
&=-\left(\int_{F\setminus B}\absolute{x-\beta^{-1}\gamma y}^{-\alpha}\absolute{\omega(y)}+\mu^\Gamma(B)^{1-\alpha}\right)\psi_{B,j}^{\Gamma}(x)
\end{align*}
for $x\in F$, where 
the last equality uses \cite[Thm.\ 3.1]{Kozyrev2004} in a similar manner as Lemma \ref{eigenfunction}.
%

\smallskip
What has been established so far, is that $\psi_{B,j}^\Gamma\in L^2(\Omega(K)\setminus V(\omega),\absolute{\omega})^\Gamma$ is an eigenfunction of $\mathcal{H}_\gamma^\alpha$ for any $\gamma\in\Gamma$ and $\alpha>0$. 
This means that $\mathcal{H}_\alpha$ takes
the closed subspace $L^2(\Omega(K)\setminus V(\omega),\absolute{\omega})^\Gamma$ to itself. Hence, $\Delta_\alpha$ equals the square of the restriction of $\mathcal{H}_\alpha$ to that space, since the $\Gamma$-invariant Kozyrev eigenvalues of $\mathcal{H}_\alpha$ are real numbers. 
\newline

Now, it follows that
\begin{align*}
\mathcal{H}_\alpha\,&\psi_{B,j}^\Gamma(\beta x)
\\
&=-\mu^\Gamma(F)^{-1}
\sum\limits_{\gamma\in\Gamma}
\absolute{\pi}^{\alpha_g\ell(\beta^{-1}\gamma)}
\left(
\int_{F\setminus B}\absolute{x-\beta^{-1}\gamma y}^{-\alpha}\absolute{\omega(y)}
+\mu^\Gamma(B)^{1-\alpha}
\right)\psi_{B,j}^\Gamma(x)
\\
&=-\mu^\Gamma(F)^{-1}
\sum\limits_{\gamma\in\Gamma}
\absolute{\pi}^{\alpha_g\ell(\gamma)}
\left(
\int_{F\setminus B}\absolute{x-\gamma y}^{-\alpha}\absolute{\omega(y)}
+\mu^\Gamma(B)^{1-\alpha}
\right)\psi_{B,j}^\Gamma(x)
\end{align*}
where the last equality follows from the fact that summation over $\gamma\in\Gamma$ is the same as summation over $\beta^{-1}\gamma\in\Gamma$. Hence, the expression does not depend on the choice of $\beta\in\Gamma$. 
Hence, the $\psi_{B,j}^\Gamma$ is an  eigenfunction of $\Delta_\alpha^\frac12$ for $\alpha>0$ with  eigenvalue $\lambda_B$ as stated. 
Indeed, it can be checked that the infinite sum does converge, because $\gamma y$ never falls into $B$, implying that $\absolute{x-\Gamma y}$ does not become arbitrarily small. This proves the value of eigenvalue $-\lambda_B$
of $\mathcal{H}_\alpha$, or, equivalently, that of eigenvalue $\lambda_B$ of $\Delta_\alpha^\frac12$.
Hence, 
\[
\Delta_\alpha^{\frac12}=-\mathcal{H}_\alpha
\]
for $\alpha>0$, as asserted.
\newline

As to the multiplicities,  clearly, $\lambda_B$ does not depend on the choice of $j\in O_K/\pi O_K$. This accounts for the factor $(p^f-1)$ in its multiplicity. The other factor is obtained by observing that $\lambda_B$ only depends on the $\Gamma$-invariant volume of disc $B\subset F$  and a summation of  $I_F(\gamma B)$ terms, which is invariant by Lemma \ref{invariantKernel}. This again yields a finite contribution to the multiplicity, as $F$ is compact, and it also follows that both, $\lambda_B$ and its multiplicity, are invariant under replacing $F$ with $\gamma F$ for $\gamma\in\Gamma$. 
%
This proves the theorem. 
\end{proof}

The eigenvalue of $\psi_{B,j}$ 
is invariant under the action of $\Gamma$, but there is a dependence on the 
choice of a good fundamental domain modulo $\Gamma$-equivalence,
which likely leads to different spectra for different such choices.
Anyway, if $\phi\colon F\to\tilde{F}$ is a bianalytic map between two fundamental domains, then
\[
H_\alpha(\beta\phi(x),\gamma\phi(y))
=\mu^\Gamma(\tilde{F})^{-1}
\absolute{\pi}^{\alpha_g\ell(\beta^{-1}\gamma)}
\absolute{\beta\phi(x)-\gamma\phi(y)}^{-\alpha}
\]
and
\[
\absolute{\omega(\phi(y))}=\absolute{f(\phi(y))}\absolute{\phi'(y)}\absolute{dy}
\]
where $\omega$ on $\tilde{F}$ takes the form:
\[
\omega(z)=f(z)\,dz
\]
for some holomorphic function $f\colon F\to K$. This leads to
\begin{align}\label{trafo}
\mathcal{H}_\alpha 
&
u(\beta\phi(x))
\\\nonumber
&=\mu^\Gamma(\tilde{F})^{-1}
\sum\limits_{\gamma\in\Gamma}
\absolute{\pi}^{\alpha_g\ell(\beta^{-1}\gamma)}
\int_{F}\absolute{\beta\phi(x)-\gamma\phi(y)}^{-\alpha}
\absolute{f(\phi(y))}\absolute{\phi'(y)}\absolute{dy}
\end{align}
for functions $u\colon\tilde{F}\to\mathds{C}$, and $\beta\in\Gamma$.

\begin{Lemma}
The quantity $\lambda_B$ corresponding to $\psi_{B,j}(x)$ transforms under $\phi$ to $\lambda_{\phi(B)}$ with
\begin{align*}
\lambda_{\phi(B)}&=
\mu^\Gamma(\phi(F))^{-1}
\\
&\cdot\sum\limits_{\gamma\in \Gamma}
\absolute{\pi}^{\alpha_g\ell(\gamma)}
\left(
\int_{F\setminus B}\absolute{x-\gamma\phi(z)}^{-\alpha}\absolute{\phi'(z)}\absolute{\omega(z)}-\mu^\Gamma(\phi(B))
\right)
\end{align*}
where $B\subset\Omega(K)\setminus V(\omega)$ is a disc.
\end{Lemma}

\begin{proof}
Since the bi-analytic pre-image of a $p$-adic disc is a $p$-adic disc, the expression $\lambda_{\phi(B)}$ is a well-defined eigenvalue of a well-defined $\Gamma$-periodic wavelet.
The expression for  $\lambda_{\phi(B)}$ follows in a straightforward manner. 
\end{proof}

\begin{remark}
Both, the genus and the geometry of a Mumford curve are encoded in the spectrum of $-\Delta_\alpha^\frac12$, as can be seen in Theorem \ref{spectrum}. Firstly, via the number of elements of $\Gamma$ of a given length, leading to a given coefficient in the infinite sum making up $\lambda_B$. This coefficient thus depends on the number $g$ of free generators of $\Gamma$, i.e.\ the genus of $X$. Secondly, via the integral $\int_{F\setminus B}$
which is  determined by  the geometry of a Mumford curve via the holes in a good fundamental domain.
\end{remark}
\section{Feller property}





\begin{Lemma}\label{FellerSemigroup}
The linear  operator $\mathcal{H}_\alpha=-\Delta_\alpha^\frac12$ generates a Feller semigroup $\exp\left(-t\Delta_\alpha^\frac12\right)$ with $t\ge 0$ on $C(\Omega(K),\norm{\cdot}_\infty)^\Gamma$ for $\alpha>0$.
\end{Lemma}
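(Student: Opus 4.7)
The plan is to invoke the Hille--Yosida--Ray theorem for Feller generators: a densely defined linear operator $A$ on $C_0(Y)$ has a closure that generates a Feller semigroup precisely when $A$ satisfies the positive maximum principle and $(\lambda I - A)$ has dense range for some $\lambda > 0$. Here the underlying locally compact space is the orbit space $X(K)=\Omega(K)/\Gamma$, a compact Mumford curve, via the isometric identification $C(\Omega(K),\norm{\cdot}_\infty)^\Gamma\cong C(X(K))$.

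The first step is to verify the positive maximum principle on the dense domain $\mathcal{D}(\Omega(K))^\Gamma\subset\operatorname{dom}(\mathcal{H}_\alpha)$. If a $\Gamma$-invariant $u$ attains its nonnegative supremum at $x_0\in F$, then in the absolutely convergent double series of Lemma \ref{denselyDefinedOperator} defining $\mathcal{H}_\alpha u(x_0)$ every integrand $H_\alpha(x_0,\gamma y)(u(y)-u(x_0))$ is a positive kernel times a nonpositive factor. Hence $\mathcal{H}_\alpha u(x_0)\le 0$, and a standard argument gives dissipativity of $\mathcal{H}_\alpha$ in sup-norm.

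The second step handles the range condition. Theorem \ref{spectrum} furnishes an explicit eigendecomposition: $\mathcal{H}_\alpha\psi_{B,j}^\Gamma=-\lambda_B\,\psi_{B,j}^\Gamma$ and $\mathcal{H}_\alpha\mathbf{1}=0$, so for any $\lambda>0$,
\[
(\lambda I-\mathcal{H}_\alpha)\psi_{B,j}^\Gamma=(\lambda+\lambda_B)\,\psi_{B,j}^\Gamma,\qquad(\lambda I-\mathcal{H}_\alpha)\mathbf{1}=\lambda\,\mathbf{1},
\]
placing every $\Gamma$-invariant Kozyrev wavelet supported outside $V(\omega)$ together with every constant inside the range of $\lambda I-\mathcal{H}_\alpha$. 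It then suffices to show that the linear span of these eigenfunctions is $\norm{\cdot}_\infty$-dense in $C(\Omega(K))^\Gamma$. By total disconnectedness of the Mumford curve this reduces to approximating indicators of clopen discs in $F\setminus V(\omega)$ by finite wavelet sums plus a constant, which is a standard feature of the $p$-adic Haar/Kozyrev system.

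With dissipativity and range density in place, the Hille--Yosida--Ray theorem yields a strongly continuous, positivity-preserving contraction semigroup on $C(\Omega(K),\norm{\cdot}_\infty)^\Gamma$ whose generator coincides with the closure of $\mathcal{H}_\alpha=-\Delta_\alpha^{1/2}$, and $\exp(-t\Delta_\alpha^{1/2})$ is its value at time $t\ge 0$. The hardest step is expected to be the sup-norm density claim: the wavelet basis is naturally an $L^2$-orthonormal basis from Theorem \ref{spectrum}, and upgrading this to a uniform approximation result requires carefully handling the finitely many zeros of $\omega$ inside $F$, so that the clopen decomposition of $F$ used for the finite wavelet expansion of disc indicators is compatible with the excision of $V(\omega)$.
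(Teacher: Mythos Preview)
Your overall framework (Hille--Yosida--Ray) and your treatment of the positive maximum principle match the paper.  The divergence is in the range condition: the paper does \emph{not} use the spectral data of Theorem~\ref{spectrum} but instead truncates the kernel by removing a small ball $B_k(z)$ around the evaluation point, obtains bounded operators $T_k$ with $\norm{T_k}<1$, inverts $I-T_k$, and passes to the limit $k\to\infty$ by a Cauchy-sequence argument.

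Your spectral route has a genuine gap at exactly the point you flag as hardest: the span of $\{\psi_{B,j}^\Gamma:B\cap V(\omega)=\emptyset\}\cup\{\mathbf 1\}$ is \emph{not} dense in $C(\Omega(K))^\Gamma$ in sup norm.  Here is the obstruction.  Fix a zero $z\in V(\omega)\cap F$ and let $D$ be any maximal sub-disc of $F$ with $z\notin D$ (e.g.\ $D=1+p\mathbb{Z}_p$ inside $F=\mathbb{Z}_p$ with $z=0$).  Every admissible wavelet $\psi_{B,j}$ that meets $D$ must, by the ultrametric inequality, satisfy $B\subseteq D$ (the next larger disc already contains $z$), and hence has Haar-mean zero over $D$.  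Consequently any $g=c+\sum a_i\psi_{B_i,j_i}$ in your span satisfies simultaneously
\[
\frac{1}{\mu_K(D)}\int_D g\,d\mu_K=c
\qquad\text{and}\qquad g(z)=c,
\]
so the average of $g$ over $D$ is forced to equal its value at $z$.  The indicator $\mathbf 1_D$ violates this (average $1$, value at $z$ equal to $0$), so it lies at sup-distance at least $\tfrac12$ from the span.  More generally, every function in the sup-closure is $L^2$-orthogonal to each $\psi_{B,j}$ with $B\ni z$, which is a nontrivial closed constraint in $C(F)$.  Thus the eigenfunction span cannot supply the density you need for the range of $\lambda I-\mathcal H_\alpha$, and the argument as written does not close.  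The paper's truncation method sidesteps this by never invoking a basis at all.
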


\begin{proof}
The criteria given by the Hille-Yosida-Ray Theorem 
are verified, cf.\ \cite[Ch.\ 4, Lem.\ 2.1]{EK1986}.
\newline

\noindent
1. The domain of $-\Delta_\alpha^\frac12$ is dense in $C(\Omega(K),\mathds{R})^\Gamma$. This follows from 
Lemma \ref{denselyDefinedOperator}.
\newline

\noindent
2. $-\Delta_\alpha^\frac12$ satisfies the positive maximum principle. For this, let $h\in\mathcal{D}(\Omega(K))^\Gamma$, and $x_0\in\Omega(K)$ such that $h(x_0)=\sup\limits_{x\in\Omega(K)}h(x)$. This exists, because $h$ is $\Gamma$-periodic, and the fundamental domain $F$ is compact.
Then
\[
-\Delta_\alpha^\frac12h(x_0)\le\int_{\Omega(K)}H_\alpha(x_0,y)(h(x_0)-h(x_0))\absolute{\omega(y)}\le0
\]
which implies the positive maximum principle.
\newline

\noindent
3. $\range(\eta I+\Delta_\alpha)$ is dense in $C(\Omega(K),\mathds{R})^\Gamma$ for some $\eta>0$.
Since $-\Delta_\alpha^{\frac12}$ is unbounded, an approach different the proof of \cite[Lem.\ 4.1]{ZunigaNetworks}
is required. Let $h\in C(\Omega(K),\mathds{R})$, $\eta>0$.
The task is to find a solution of the equation
\begin{align}\label{resolventEquation}
\left(\eta I+\Delta_\alpha^\frac12\right)u=h
\end{align}
for some $\eta>0$ and $h$ in some dense subspace of $C(\Omega(K),\mathds{R})^\Gamma$.
The equation formally can be rewritten as
\begin{align}\label{rewrittenOperator}
u(z)-\frac{\int H_\alpha(z,y)u(y)\absolute{\omega(y)}}{\eta+\deg(z)}=\frac{h(z)}{\eta+\deg(z)}
\end{align}
with
\[
\deg(z)=\int_{\Omega(K)}H_\alpha(z,y)\absolute{\omega(y)}
\]
which does not converge, as the operator $\Delta_\alpha^\frac12$ is unbounded. That is why  the operator
\[
T_ku(z)
=\frac{\int_{\Omega_{z,k}}H_\alpha(z,y)u(y)\absolute{\omega(y)}}{\eta+\deg_k(z)}
\]
with
\[
\Omega_{z,k}=\bigsqcup\limits_{\gamma\in\Gamma}\gamma F_{z,k}
\]
and
\[
F_{z,k}=F\setminus B_k(z)
\]
for $k>>0$
is now being studied.
Let
\[
\deg_k(z)=\int_{\Omega_{z,k}}H_\alpha(z,y)\absolute{\omega(y)}
\]
which is finite for $k>>0$, and
\[
\absolute{T_ku(z)}\le
\frac{\deg_k(z)}{\eta+\deg_k(z)}\norm{u}_\infty
\]
where the supremum norm of $u$ is finite, as $u$ is $\Gamma$-invariant and $F$ is compact.
Hence,
\[
\norm{T_k}\le\frac{1}{\eta/\deg_k(z)+1}<1
\]
for any $\eta>0$, and $k>>0$, and in this case it follows that
$I+T_k$ has a bounded inverse as an operator on $C(\Omega(K),\mathds{R})^\Gamma$. This proves the denseness of its range for $k>>0$.
\newline

Now, let $h\in\mathcal{D}(\Omega(K))^\Gamma$, and $u_k,u_\ell\in C(\Omega(K),\mathds{R})^\Gamma$ be solutions of
\[
(I+T_k)u_k=\frac{h}{\eta+\deg_k}
,\quad(I+T_\ell)u_\ell=\frac{h}{\eta+\deg_\ell}
\]
for $k,\ell>>0$. Then
\begin{align}\label{complicated}
u_k-u_\ell=
\frac{(I+T_\ell)(\eta+\deg_\ell)-(I+T_k)(\eta+\deg_k)}{(I+T_k)(I+T_\ell)(\eta+\deg_k)(\eta+\deg_\ell)}h
\end{align}
shows that $u_k$ is a Cauchy sequence w.r.t.\ $\norm{\cdot}_\infty$. The reason is that, firstly,
\[
\norm{T_k}=\sup\limits_{z\in F}\frac{\deg_k(z)}{\eta+\deg_k(z)}
\]
clearly holds true, and this is a (strictly increasing) sequence convergent to $1$, and 
this implies the convergence of the sequence of operators $T_k$ to a bounded linear operator $T$ on $C(\Omega(K),\mathds{R})^\Gamma$.
Secondly, the numerator of the right hand side of (\ref{complicated}) is
\[
\eta(T_\ell-T_k)+(\deg_\ell-\deg_k)+(T_\ell\deg\ell-T_k\deg_k)
\]
whose first and second terms become arbitrarily small in norm as $\ell\ge k\to\infty$. The third term is
\[
T_\ell\deg_\ell-T_k\deg_k
=(T_\ell\deg_\ell-T_k\deg_\ell)+(T_k\deg_\ell-T_k\deg_k)
\]
both of whose terms in norm become arbitrarily small as $\ell\ge k\to\infty$.
Hence, $u_k$ converges to some $u\in C(\Omega(K),\mathds{R})^\Gamma$ which is seen to be a solution of
(\ref{resolventEquation}) by using the limit operator $T$ as follows: Namely, 
\[
(\eta+\deg_k)T_k\to(\eta+\deg)T\quad(k\to\infty)
\]
where the limit operator
coincides with the unbounded integral operator
\[
u\mapsto A u=\int_{\Omega(K)}
H_\alpha(\cdot,y)u(y)\absolute{\omega(y)}
\]
which shows that the
operator
\[
\frac{A}{\eta+\deg}=T
\]
appearing in (\ref{rewrittenOperator}) is bounded.
Now, $u_k$ is a solution of
\[
(\eta I -H_k)u_k=h
\]
with 
\[
H_k=(\eta+\deg_k)T_k-\deg_k
\]
which for $k\to\infty$ converges to $-\Delta_\alpha^\frac12$. As $u_k\to u$, it follows that
\[
(\eta I+\Delta_\alpha^\frac12)u=(\eta I-H_k)u+(H_k+\Delta_\alpha^\frac12)u
\]
where
\[
(\eta I-H_k)u
=(\eta I-H_k)u_k+H_k(u_k-u)
=h+H_k(u_k-u)\to h
\]
and
\[
(H_k+\Delta_\alpha^\frac12)u\to 0
\]
for $k\to\infty$.
Hence, $u$ is a solution of (\ref{resolventEquation}).
This proves that the range of $\eta I+\Delta_\alpha^\frac12$ contains the real-valued functions in $\mathcal{D}(\Omega(K))^\Gamma$ which is dense in $C(\Omega,\mathds{R})^\Gamma$. 
\newline

Now, by Hille-Yosida-Ray, the assertion follows.
\end{proof}

\begin{thm}
There exists a probability measure $p_t(x,\cdot)$ with $t\ge0$, $x\in \Omega(K)\setminus V(\omega)$ on the Borel $\sigma$-algebra of $\Omega(K)\setminus V(\omega)$ such that the Cauchy problem for the heat equation 
\[
\left(
\frac{\partial}{\partial t}+\Delta_\alpha^\frac12
\right)h(t,x)=0
\]
having initial condition $h(0,x)=h_0(x)\in C(\Omega(K)\setminus V(\omega),\norm{\cdot}_\infty)^\Gamma$ has a unique solution in $C^1\left((0,\infty),\Omega(K)\setminus V(\omega)\right)^\Gamma$ of the form
\[
h(t,x)=\int_{\Omega(K)\setminus V(\omega)}h_0(y)p_t(x,\absolute{\omega(y)}
\]
Additionally, $p_t(x,\cdot)$ is the transition function of a strong Markov process on $(\Omega(K)\setminus V(\omega))/\Gamma$ whose paths are c\`adl\`ag.
\end{thm}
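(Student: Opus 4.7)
The plan is to convert the Feller semigroup produced by Lemma \ref{FellerSemigroup} into the probabilistic data asked for in the statement, following the classical recipe that attaches a strong Markov process with c\`adl\`ag paths to any Feller semigroup on a locally compact separable space, as in \cite[Ch.\ 4]{EK1986}. Writing $T_t=\exp(-t\Delta_\alpha^{\frac12})$, I would first fix $t\ge 0$ and $x\in\Omega(K)\setminus V(\omega)$, and apply the Riesz--Markov representation theorem to the positive linear functional $h_0\mapsto (T_th_0)(x)$ on $\Gamma$-invariant continuous functions. This yields a Borel measure $p_t(x,\cdot)$ such that $(T_th_0)(x)=\int h_0(y)\,p_t(x,\absolute{\omega(y)})$. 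Since the kernel in $\mathcal{H}_\alpha u(z)=\int H_\alpha(z,y)(u(y)-u(z))\absolute{\omega(y)}$ annihilates the constant function $u\equiv 1$, the generator satisfies $\Delta_\alpha^{\frac12}1=0$, so $T_t 1=1$ and $p_t(x,\cdot)$ is a probability measure.

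Existence and uniqueness for the Cauchy problem then follow from general semigroup theory. For $h_0$ in $\dom(\mathcal{H}_\alpha)$, the orbit $h(t,x)=(T_th_0)(x)$ is $C^1$ in $t$, satisfies the heat equation by construction, and is represented by $p_t$ as claimed; for general continuous $\Gamma$-invariant $h_0$, the same identity holds in the mild-solution sense, and uniqueness is immediate from the semigroup property together with the fact that the generator determines the semigroup, or equivalently via a Gr\"onwall estimate on the difference of two solutions. To obtain the strong Markov property and c\`adl\`ag paths, I would invoke the standard construction just cited: the Feller semigroup, viewed on continuous functions on the locally compact Hausdorff quotient $(\Omega(K)\setminus V(\omega))/\Gamma$, admits a realisation as the transition semigroup of a Hunt process, which in particular is strong Markov with c\`adl\`ag trajectories. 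That the quotient is locally compact Hausdorff follows from $X(K)$ being compact together with Assumption \ref{assumption}, which ensures that $V(\omega)$ descends to a finite subset of $X(K)$.

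The main technical obstacle is the compatibility between the two underlying spaces: Lemma \ref{FellerSemigroup} produces a Feller semigroup on $C(\Omega(K),\norm{\cdot}_\infty)^\Gamma$, whereas the statement requires a process on $\Omega(K)\setminus V(\omega)$ with initial condition supported away from $V(\omega)$ and with integration against $\absolute{\omega}$. Reconciling these requires verifying that $V(\omega)$ is polar for the associated process, so that $p_t(x,\cdot)$ charges only $\Omega(K)\setminus V(\omega)$ whenever $x$ lies there. Since $V(\omega)$ is countable and $\absolute{\omega}$ assigns no mass to it, this is expected, but the argument is delicate because the nonlocal kernel $H_\alpha$ couples every point of $\Omega(K)$ to every other. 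The cleanest route is to exploit the spectral decomposition of Theorem \ref{spectrum}: the orthonormal basis of $\Gamma$-invariant Kozyrev wavelets used there is supported in $\Omega(K)\setminus V(\omega)$, and since $T_t$ acts diagonally on this basis with strictly positive eigenvalues $e^{-t\lambda_B}$, its action preserves the subspace of functions vanishing on neighbourhoods of $V(\omega)$, which forces $\supp p_t(x,\cdot)\subset\Omega(K)\setminus V(\omega)$ for $x$ in that set. Once this support property is established, the restriction of the Feller semigroup and its process to the open set $\Omega(K)\setminus V(\omega)$ is automatic, and the remainder of the theorem follows from the general theory.
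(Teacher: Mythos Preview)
Your proposal follows essentially the same route as the paper: both start from the Feller semigroup of Lemma \ref{FellerSemigroup} and invoke the standard correspondence between Feller semigroups and strong Markov processes with c\`adl\`ag paths. The paper cites Taira's monograph \cite{Taira2009} (specifically Theorems 2.10, 2.12 and 2.15) and follows the template of \cite[Thm.\ 4.2]{ZunigaNetworks} for the existence of a uniformly stochastically continuous $C_0$-transition function, whereas you go via Riesz--Markov together with the Ethier--Kurtz construction; these are interchangeable packagings of the same classical machinery, and your observation that $\Delta_\alpha^{\frac12}1=0$ forces $p_t(x,\cdot)$ to have total mass one is the concrete content behind the paper's appeal to condition (L).

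The one place where the two arguments genuinely diverge is the passage from $\Omega(K)$ to $\Omega(K)\setminus V(\omega)$. The paper disposes of this in a line by citing \cite[Prop.\ 15]{brad_HeatMumf}, which says that a Feller semigroup restricts to a closed invariant subspace, here $C(\Omega(K)\setminus V(\omega),\norm{\cdot}_\infty)^\Gamma$. Your alternative via the spectral decomposition of Theorem \ref{spectrum} is natural but, as written, does not quite close: the diagonal action of $T_t$ on the $L^2$ wavelet basis does not by itself pin down the support of the \emph{measure} $p_t(x,\cdot)$, since the basis contains wavelets on discs arbitrarily close to $V(\omega)$, and an $L^2$-statement does not directly yield the pointwise support control you need. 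The cleanest fix, and the one implicit in the paper's citation, is to check at the level of the generator that $-\Delta_\alpha^{\frac12}$ already preserves the closed subspace of $\Gamma$-invariant continuous functions vanishing on $V(\omega)$; once that is established, the semigroup inherits the same invariance and the restriction of the process to $(\Omega(K)\setminus V(\omega))/\Gamma$ is automatic.
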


This is Theorem 2.
The notation $C^1\left((0,\infty),\Omega(K)\setminus V(\omega)\right)^\Gamma$ indicates that for each $t>0$, any such function $h(t,x)$ is $\Gamma$-invariant.

\begin{proof}
According to Lemma \ref{FellerSemigroup}, $-\Delta_\alpha$ generates a Feller semigroup on the Banach space $C(\Omega(K),\norm{\cdot}_\infty)^\Gamma$. Using 
\cite[Prop.\ 15]{brad_HeatMumf} allows to restrict to
the the closed subspace
$C(\Omega(K)\setminus V(\omega),\norm{\cdot}_\infty)^\Gamma$ invariant under $-\Delta_\alpha^\frac12$.
Hence, $-\Delta_\alpha^\frac12$ generates a Feller semigroup also on that space. Now, it can be argued as in the proof of \cite[Thm.\ 4.2]{ZunigaNetworks}, namely that there exists a uniformly stochastically continuous $C_0$-transition
function $p_t(x, \absolute{\omega(y)})$ satisfying condition (L) of \cite[Thm.\ 2.10]{Taira2009}
 such that
 \[
 e^{-t\Delta_\alpha^\frac12}h_0(x)=
 \int_{\Omega(K)\setminus V(\omega)}
 h_0(y)p_t(x,\absolute{\omega(y)})
 \]
 cf.\ \cite[Thm.\ 2.15]{Taira2009}.
 From the correspondence between transition functions and Markov processes, there now exists a strong Markov process on the quotient space $(\Omega(K)\setminus V(\omega))/\Gamma$, which consists of $X(K)$ minus finitely many points, and whose paths are c\`adl\`ag, cf.\ \cite[Thm.\ 2.12]{Taira2009}.
\end{proof}

\section*{Acknowledgements}

\'Angel Mor\'an Ledezma, Leon Nitsche and David Weisbart are warmly thanked for fruitful discussions. Wilson Z\`u\~n\`{\i}ga-Galindo is thanked for bringing this subject onto the table. The anonymous referee is thanked for pointing out errors and for valuable suggestions for improving the exposition of the paper.
This work is partially supported by the Deutsche Forschungsgemeinschaft under project number 469999674.

\bibliographystyle{plain}
\bibliography{biblio}

\end{document}